\newtheorem{theorem}{Theorem}[section]
\newtheorem{corollary}[theorem]{Corollary}
\newtheorem{definition}[theorem]{Definition}
\newtheorem{lemma}[theorem]{Lemma}
\newtheorem{observation}[theorem]{Observation}
\newcommand\ie{{\em i.e., }}
\DeclareMathOperator\ad{ad}
\DeclareMathOperator\Aut{Aut}
\DeclareMathOperator\C{C}
\DeclareMathOperator\End{End}
\DeclareMathOperator\id{id}
\DeclareMathOperator\im{im}
\DeclareMathOperator\Sym{Sym}
\newcommand\la{\langle}
\newcommand\ra{\rangle}
\newcommand\size[1]{\lvert #1\rvert}
\newcommand\CC{\mathbb{C}}
\newcommand\kk{\mathbf{k}}
\newcommand\QQ{\mathbb{Q}}
\newcommand\RR{\mathbb{R}}
\newcommand\ZZ{\mathbb{Z}}
\newcommand\op{\mathrm}
\renewcommand\frak{\mathfrak}
\newcommand\al{\alpha}
\newcommand\dl{\delta}		
\newcommand\ep{\varepsilon}
\newcommand\kp{\kappa}
\newcommand\lm{\lambda}
		\newcommand\Om{\Omega}
\newcommand\nhood{\C^\op{c}}
\newcommand\commute{\C^\times}
\newcommand\rt{\mathrm}
\begin{document}
\abovedisplayskip=0.5em
\belowdisplayskip=0.5em

\title{Fusion rules from root systems I: case $\rt A_n$}
\author{F. Rehren}
\date{\today}
\maketitle

\begin{abstract}
	Axial algebras are commutative algebras generated by idempotents;
	they generalise associative algebras
	by allowing the idempotents to have additional eigenvectors,
	controlled by fusion rules.
	If the fusion rules are $\ZZ/2$-graded,
	axial algebras afford representations of transposition groups.
	We consider axial representations of Weyl groups of simply-laced root systems,
	which are examples of regular $3$-transposition groups.
	We introduce coset axes,
	a special class of idempotents based on embeddings of transposition groups,
	and use them to study the propagation of fusion rules in axial algebras,
	for root system $\rt A_n$.
	This is related to the construction of lattice vertex operator algebras
	and we show it reflects on the fusion of modules for the Virasoro algebra
	when we specialise our construction.
\end{abstract}

\renewcommand{\arraystretch}{1.75}
\section{Introduction}
\label{sec intro}
	Inside a simple construction
	there is room for greater beauty and complexity to grow.
	This note studies the fusion rules that arise
	inside certain nonassociative `axial' algebras
	stemming from simply-laced root systems.
	Our construction is an axial representations of Weyl groups,
	or more generally transposition groups.
	In special cases, we find the emerging complexity in our algebras
	is in intimate relation to the fusion rules for modules of the Virasoro algebra,
	an infinite-dimensional Lie algebra.

	Our theory starts with root systems.
	In the 80s, these led to the first constructions of vertex operator algebras (VOAs) \cite{flm},
	which are the inspiration behind axial algebras.
	Root systems are a powerful tool for analysing large families of VOAs,
	via, for example, so-called frames.
	These are related to the surprising fact that the Griess algebra
	(the celebrated 196884-dimensional algebra
	whose automorphism group was the first realisation of the Monster group \cite{griess})
	contains an associative subalgebra of dimension $48$ \cite{meyer-neutsch},
	and this is the largest possible associative subalgebra \cite{miyamoto}.
	Further developments in this direction were made in \cite{dlmn},
	\cite{dmz} and \cite{dgh}.

	In particular \cite{dlmn} discusses the weight-$2$ subalgebra
	of so-called lattice VOAs (for lattices of simply-laced root systems)
	and shows that they yield interesting representations of the Virasoro algebra.
	This is one point of departure for us.
	Another is \cite{hrs}, which began the analysis
	of axial algebras related to $3$-transposition groups.
	In combination of the two,
	for any $3$-transposition group 
	we construct a family of representations, on axial algebras,
	parametrised by $\al$ an eigenvalue.
	We find eigenvectors and their fusion rules for some special idempotents
	in the case of the Weyl group of $\rt A_n$.
	This is the first time, as far as we are aware,
	that a description of families of fusion rules occurring in an algebra
	has been given.
	Then, in specialising $\al$,
	we recover additional details about the \cite{dlmn} construction
	and glimpse a new point of view on the so-called minimal models of the Virasoro algebra.
	
	We now outline this note and its results.
	The summary is followed by a discussion.

	Chapter \ref{sec bg}, which can be skimmed by the expert for notation,
	establishes the necessary notions and facts as follows.
	Section~\ref{ss axial} first defines fusion rules,
	which describe the multiplication of submodules of an algebra
	analogous to the description of multiplication of elements
	by structure constants.
	An axis is an idempotent the multiplication of whose eigenspaces is described by a specified fusion rules;
	an axial algebra is a nonassociative algebra generated by axes.
	If the fusion rules satisfy a $\ZZ/2$-grading property,
	we can also study the automorphism groups of axial algebras
	and we define axial representations of transposition groups.
	We also briefly describe axial representations of $3$-transposition groups;
	this is related to \cite{hrs}.
	Such representations are an essential constituent of this paper.
	Section~\ref{ss cox weyl}	summarises results about transposition-, Coxeter- and Weyl groups.
	Section~\ref{ss vir} sketches facts about the Virasoro algebra;
	more comprehensive treatment could be found in \cite{cft}.

	Chapter \ref{sec coset axes} contains the substance of this note.
	We introduce coset axes in Definition \ref{def coset axis},
	which are idempotents in the representation of a transposition group
	derived from embeddings of subtransposition groups.
	Together with some straightforward results on identities of algebras,
	this is Section~\ref{ss id ca}.
	We then specialise to the case of the root system $\rt A_n$ for Section~\ref{ss eigval} and the remainder of this paper.
	In Theorem \ref{thm fus rules} we give the $5$ possible eigenvalues and their fusion rules
	for coset axes arising from embeddings of root systems $\rt A_\ell\subseteq\rt A_m$,
	which we can deduce from the properties of identities of subalgebras.
	In Section~\ref{ss cc},
	we introduce a bilinear form
	for some simple results on the length of coset axes.
	As mentioned in \cite{dlmn},
	this is evocative of the celebrated coset construction of \cite{gko},
	but our construction depends on the rank of the root system
	with fixed level equal to $1$ for its affine lie algebra.
	In Section~\ref{ss apps},
	we discuss the specialisation of our construction
	that allows an embedding in the weight-$2$ subalgebra of lattice VOAs,
	and recovers the situation of \cite{dlmn}.
	Here our results on eigenvalues and fusion rules
	become, in Observation \ref{obs hw identification},
	results about the fusion rules of Virasoro algebras,
	which are Clebsch--Gordan-type coefficients
	for the decomposition into irreducibles
	of the fusion of two modules.
	In particular, we recover the highest weights
	for some points in the bottom left corner
	of the Kac table
	(which classifies the possible highest weights, see, e.g.~\cite{cft} \S7.3.3)
	for a unitary irreducible representation of central charge less than $1$.
	Finally, in Section~\ref{ss double fischer},
	we examine the additional eigenvalues in the $\rt A_n$ case (Lemma \ref{lem id+ extra eigv})
	found in a construction generalising weight-$2$ subalgebras of lattice VOAs.
	Its specialisation picks out two further points,
	in Observation \ref{obs hat hw identification}, of the Kac table.

	The small, fixed window on the Kac table that is opened
	is evidently in search of explanation.
	Speculatively, perhaps the eigenvalues of the identities $i,j$
	for the coset axis $i - j$
	line the sides of the Kac table.
	Observe that we only observe points $h^c_{r,s}$ whose coordinates $(r,s)$
	have $r,s$ both odd.
	A consequence of the fusion rules for the Virasoro algebra
	is that there is no $\ZZ/2$-grading, in this case,
	unless one of $r,s$ is even;
	this follows from \cite{wang}.
	We do not know of a construction $\tilde A_\Phi(G,D)$
	that generalises $A^\al_3(G,D)$ and $\hat A^\al_3(G,D)$
	and, in its specialisation,
	recovers any further highest weights in the Kac table
	for coset axes when $(G,D) = W(\rt A_n)$.

	The equation \eqref{eq cc 1/32} seems to be a hint
	that a more general construction of lattice VOAs is possible.
	In particular, the coset axis $x_{\rt A_2/\rt A_1}$ in $\smash{A^{1/32}_3(\rt A_n)}$
	has central charge $\frac{21}{22}$, matching $\op{Vir}_{c(12,11)}$,
	and the observed eigenvalues of the former
	match the highest weights of the latter.
	The coset axes $x_{\rt A_i/\rt A_{i-1}}$, $i\geq3$,
	have central charge greater than $1$,
	so if such a lattice VOA exists
	we also have observations on highest weights in the continuous sectors.

	We have focused on results for the root system $\rt A_n$.
	The case $\rt D_n$ has a different flavour,
	since the coset axes have central charge $1$ in the specialisation
	(see \eqref{eq f quarter Dm}),
	and their fusion rules are $\ZZ/2$-graded,
	so these axes give rise to Miyamoto automorphisms of the algebra.
	This case will be considered in a later work.

	I thank
	Alonso Castillo-Ramirez
	and Oliver Gray
	for many stimulating discussions.

	\clearpage

\section{Background} \label{sec bg}
	\subsection{Fusion rules and axial things} \label{ss axial}
		A fusion rules is a simple bookkeeping device
		that we use to describe properties of axes.
		\begin{definition}
			\label{def fus rules}
			A {\em fusion rules} over a field $\kk$
			is a finite collection of {\em eigenvalues} $\Phi\subseteq\kk$
			and a map $\star\colon\Phi\times\Phi\to2^\Phi$.
			We will often allow $\Phi$ to stand for the pair $(\Phi,\star)$.
		\end{definition}

		We give three examples of fusion rules:
		$\Phi_{\rm ass} = (\{1,0\},\star_{\rm ass})$,
		$\Phi^\al_3 = (\{1,0,\al\},\star_3)$ for any $\al\in\kk\smallsetminus\{1,0\}$,
		and $\frak V(5,3) = (\{1,0,\frac{1}{10},-\frac{1}{40},\frac{3}{8}\},\star_{(5,3)})$.
		The fusion rules are below.
		Since these fusion rules, as all the fusion rules that we will consider,
		have a commutative map $\star$,
		we omit the symmetric entries.
		\begin{center}
		\begin{tabular}{|c||c|c|}
			\hline
				$\star_{\rm ass}$	& $1$ & $0$ \\
			\hline\hline
				$1$ & $\{1\}$ & $\emptyset$ \\
			\hline
				$0$ &  & $\{0\}$ \\
			\hline
		\end{tabular}
		\begin{tabular}{|c||c|c|c|}
			\hline
				$\star_3$	& $1$ & $0$ & $\al$ \\
			\hline\hline
				$1$ & $\{1\}$ & $\emptyset$ & $\{\al\}$ \\
			\hline
				$0$ &  & $\{0\}$ & $\{\al\}$ \\
			\hline
				$\al$ &  &  & $\{1,0\}$ \\
			\hline
		\end{tabular}
		\begin{tabular}{|c||c|c|c|c|c|}
			\hline
				$\star_{(5,3)}$ & $1$ & $0$ & $\frac{1}{10}$ & $-\frac{1}{40}$ & $\frac{3}{8}$ \\
			\hline\hline
				$1$ & $\{1\}$ & $\emptyset$ & $\{\frac{1}{10}\}$ & $\{-\frac{1}{40}\}$ & $\{\frac{3}{8}\}$ \\
			\hline
				$0$ &  & $\{0\}$ & $\{\frac{1}{10}\}$ & $\{-\frac{1}{40}\}$ & $\{\frac{3}{8}\}$ \\
			\hline
				$\frac{1}{10}$ &  &  & $\{1, 0, \frac{1}{10}\}$ & $\{-\frac{1}{40},\frac{3}{8}\}$ & $\{-\frac{1}{40}\}$ \\
			\hline
				$-\frac{1}{40}$ &  &  &  & $\{1, 0, \frac{1}{10}\}$ & $\{\frac{1}{10}\}$ \\
			\hline
				$\frac{3}{8}$	& 	& 	&  &  & $\{1, 0\}$ \\
			\hline
		\end{tabular}
		\end{center}

		The next definition will clarify how we use fusion rules.
		Let $A$ be a nonassociative commutative algebra over $\kk$,
		that is, a $\kk$-vector space
		with a not-necessarily-associative commutative multiplication.
		Denote by $\ad(a)\in\End(A)$ the map $b\mapsto ab$
		of left-multiplication (which coincides with right-multiplication)
		for $a\in A$.
		We use the notation $A^a_\lm = \{b\in A\mid \ad(a)b = \lm b\}$
		for the set of $\lm$-eigenvectors of $\ad(a)$ in $A$.
		The element $a\in A$ is said to be {\em semisimple}
		if $A$ decomposes as a direct sum of eigenspaces with respect to $\ad(a)$,
		that is, if there exist $\lm_1,\dotsc,\lm_n\in\kk$ such that
		\begin{equation}
			A = A^a_{\lm_1}\oplus A^a_{\lm_2}\oplus\dotsm\oplus A^a_{\lm_n}.
		\end{equation}
		Furthermore, we fix the following notations:
		\begin{equation}
			A^a_{\mu_1,\dotsc,\mu_m} = A^a_{\{\mu_1,\dotsc,\mu_m\}}
			 = A^a_{\mu_1} + A^a_{\mu_2} + \dotsm + A^a_{\mu_m}.
		\end{equation}
		Finally, recall that $a$ is an {\em idempotent} if $aa = a$.
		
		\begin{definition}
			\label{def axis}
			An element $x\in A$ is a {\em $\Phi$-axis},
			for $\Phi$ a fusion rules,
			if $x$ is a semisimple idempotent for which
			$A = A^x_\Phi = \bigoplus_{\lm\in\Phi} A^x_\lm$
			such that
			$A^x_\lm A^x_\mu \subseteq A^x_{\lm\star\mu}$.
		\end{definition}

		\begin{definition}
			\label{def axial alg}
			A nonassociative commutative algebra $A$
			is a {\em $\Phi$-axial algebra}
			if $A$ can be generated by a set of $\Phi$-axes in $A$.
		\end{definition}

		Let us give a simple example.
		Suppose that $A$ is an {\em associative} commutative algebra,
		containing a semisimple idempotent $x$.
		Then simple calculations show that $A = A^x_1 \oplus A^x_0$,
		and the product of eigenvectors matches $\star_{\rm ass}$,
		whence $x$ is a $\Phi_{\rm ass}$-axis.
		We delay slightly before giving a less shallow example:
		of a $\Phi^\al_3$-axial algebra.

		If $a\in A$ is a $\Phi$-axis,
		the existence of a grading on $\Phi$
		will lead to the existence of automorphisms of $A$.
		This is of crucial interest to us.
		\begin{definition}
			\label{def super rules}
			A fusion rules $(\Phi,\star)$ is {\em $\ZZ/2$-graded}
			if there exists a partition $\Phi_+\cup\Phi_-$ of the eigenvalues $\Phi$
			such that, for $\ep,\ep'\in\{+,-\}$ and $\lm\in\Phi_\ep,\mu\in\Phi_{\ep'}$,
			$\lm\star\mu\subseteq\Phi_{\ep\ep'}$.
		\end{definition}
		\begin{definition}
			\label{def miyamoto}
			If $\Phi$ is a $\ZZ/2$-graded fusion rules,
			realised as $\Phi = \Phi_+ \cup \Phi_-$,
			and $x\in A$ is a $\Phi$-axis,
			then the {\em Miyamoto involution $\tau(x)$} for $x$
			is an automorphism of $A$ acting as
			$$ a^{\tau(x)} = \begin{cases}
				a & \text{ if } a\in A^x_{\Phi_+} \\
				-a & \text{ if } a\in A^x_{\Phi_-} \end{cases}, $$
			and extending linearly from the eigenvectors to the entire space.
		\end{definition}

		Suppose that $\Phi$ is $\ZZ/2$-graded,
		and that $A$ is a $\Phi$-axial algebra.
		Then the subgroup of automorphisms generated by Miyamoto involutions
		has an interesting group-theoretic structure.

		\begin{definition}
			\label{def trgp}
			A {\em transposition group} is a pair $(G,D)$,
			where $G$ is a group and $D\subseteq G$ is a normal generating subset of involutions from $G$.
		\end{definition}

		Thus, if $D$ is the (normal closure) of a set of Miyamoto involutions
		from a set of $\Phi$-axes in the algebra $A$,
		then $(\la D\ra,D)$ forms a transposition group,
		lying inside $\Aut(A)$.

		The best-known example of transposition groups
		are Fischer's $3$-transposition group
		namely those in which the order of the product of any two elements of $D$ is at most $3$.
		They have been classified \cite{cuypershall}.
		For a general transposition group and $\ZZ/2$-graded fusion rules,
		we have 

		\begin{definition}
			\label{def axial rep}
			A $\Phi$-{\em axial representation}
			of a transposition group $(G,D)$
			is a mapping $\rho\colon D\to A$,
			$A$ a $\Phi$-axial algebra and $\Phi$ $\ZZ/2$-graded,
			such that for all $d\in D$, $d^\rho\in A$ is a $\Phi$-axis,
			the algebra $A$ is generated by the image $\{d^\rho\mid d\in D\}$ of $\rho$,
			and for $c,d\in D$, $(c^\rho)^{\tau(d^\rho)} = (c^d)^\rho$.
		\end{definition}

		The $3$-transposition groups in particular
		are closely related to the $\Phi^\al_3$ fusion rules,
		which can be $\ZZ/2$-graded as $(\Phi^\al_3)_+=\{1,0\}$ and $(\Phi^\al_3)_-=\{\al\}$.
		It turns out that Miyamoto automorphisms from any $\Phi^\al_3$-axes
		form a $3$-transposition group \cite{hrs},
		and any $3$-transposition group has a faithful $\Phi_3$-axial representation.
		We describe this in more detail; it will be used in the sequel.

		\begin{definition}
			\label{def fischer rep}
			For $(G,D)$ a $3$-transposition group,
			define $A = A^\al_3(G,D)$ as the algebra
			with underlying vector space $\QQ\{ d^\rho \mid d\in D\}$
			and multiplication, for some $\al\in\kk$,
			$$  c^\rho  d^\rho  = \begin{cases}
				 d^\rho  & \text{ if } c = d \\
				0 & \text{ if } [c,d] = 1 \\
				\frac{\al}{2}( c^\rho  +  d^\rho  - (c^d)^\rho) & \text{ otherwise.}
			\end{cases} $$
			Denote the mapping $d\mapsto d^\rho $ by $\rho = \rho^\al_3\colon D\to A$.
		\end{definition}

		That $A$ is commutative follows from the fact that
		if $c\neq d$ and $[c,d]\neq 1$, then $\size{cd} =3$,
		$\la c,d\ra\cong{\rm Sym}(3)$ and $c^d = d^c$.
		Then $d^\rho$ is a $\Phi^\al_3$-axis for all $d$,
		and the representation is constructed to satisfy
		$(c^d)^\rho = (c^\rho)^{\tau(d^\rho)}$,
		so $\rho$ satisfies the Definition \ref{def axial rep}
		of a $\Phi^\al_3$-axial representation.
		For further discussion, see also \cite{hrs}.
		The algebra $A$ is called the Matsuo algebra
		associated to $(G,D)$.

		There are two interesting properties of $A$ that we will remark upon.
		Firstly, $A$ admits a bilinear form $\la,\ra$, defined by
		\begin{equation}
			\label{eq def form}
			\la c^\rho , d^\rho \ra = \begin{cases}
				1 & \text{ if }c = d \\
				0 & \text{ if }[c,d]=1 \\
				\frac{\al}{2} & \text{ otherwise.}
			\end{cases}
		\end{equation}
		(The product $c^\rho d^\rho$ might now be described more concisely as
		$\la c^\rho , d^\rho \ra( c^\rho + d^\rho -(c^d)^\rho)$, in all cases.)
		
		\clearpage

		Secondly, the $\Phi^\al_3$-axes $ d^\rho $, $d\in D$, in $A$
		are {\em primitive} idempotents,
		that is,
		the $1$-eigenspace of $ d^\rho $ is precisely $\la d^\rho \ra$,
		hence $1$-dimensional.
		In particular a primitive idempotent
		is not decomposable as a sum of pairwise annihilating idempotents.
		(In an associative algebra, a nonprimitive idempotent
		whose $1$-eigenspace has dimension $d$
		has a decomposition into $d$ pairwise annihilating idempotents,
		but this is not necessarily the case in our situation.)
		We have a greater interest in primitive idempotents,
		since properties of nonprimitive idempotents may be easily deduced
		from those of the idempotents in their decomposition.
		
		Finally a general fact of interest:
		\begin{lemma}
			\label{lem f3 axial rep}
			Let $\phi\colon D\to A$ be a $\Phi^\al_3$-axial representation
			of a $3$-transposition group $(G,D)$.
			Then $A$ is a quotient of $A^\al_3(G,D)$.
			\qed
		\end{lemma}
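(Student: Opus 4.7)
The plan is to produce a surjective algebra homomorphism $\psi\colon A^\al_3(G,D)\to A$ defined on the spanning set by $d^\rho\mapsto d^\phi$ and extended $\QQ$-linearly. Since the underlying vector space of $A^\al_3(G,D)$ is, by Definition~\ref{def fischer rep}, the free $\QQ$-module on $D$, this linear map is unambiguously defined. Provided $\psi$ is multiplicative, its image is a subalgebra of $A$ containing every $d^\phi$; by the generation hypothesis in Definition~\ref{def axial rep}, this subalgebra is all of $A$, so surjectivity is automatic. It therefore suffices to verify $\psi(c^\rho d^\rho)=c^\phi d^\phi$ for every $c,d\in D$, matching the three cases of Definition~\ref{def fischer rep}.

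For this I would exploit the eigenspace decomposition $c^\phi = x_1+x_0+x_\al$ with $x_\lm\in A^{d^\phi}_\lm$, combined with the $\ZZ/2$-grading $(\Phi^\al_3)_+=\{1,0\}$, $(\Phi^\al_3)_-=\{\al\}$. The relation $(c^\phi)^{\tau(d^\phi)} = x_1+x_0-x_\al$ afforded by this grading, together with the axial representation identity $(c^\phi)^{\tau(d^\phi)} = (c^d)^\phi$, immediately isolates $x_\al = \tfrac12(c^\phi - (c^d)^\phi)$, whence
\[ c^\phi d^\phi = x_1 + \al x_\al = x_1 + \tfrac{\al}{2}\bigl(c^\phi - (c^d)^\phi\bigr). \]
The case $c=d$ is immediate from idempotency; when $[c,d]=1$ with $c\neq d$ we have $c^d=c$ so the formula reduces to $c^\phi d^\phi = x_1$; and when $\size{cd}=3$ we are left to identify $x_1$ with $\tfrac{\al}{2}d^\phi$ to recover the Matsuo product.

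The remaining work, which is the main obstacle, is pinning down the $1$-eigenspace projection $x_1$. Using primitivity of $d^\phi$---which for $\Phi^\al_3$-axes of a $3$-transposition group is part of the structure exhibited in \cite{hrs}---we write $x_1=\mu\, d^\phi$. Exchanging the roles of $c$ and $d$ yields the analogous expression $d^\phi c^\phi = \nu\, c^\phi + \tfrac{\al}{2}(d^\phi - (c^d)^\phi)$ in the case $\size{cd}=3$ (noting $d^c=c^d$ in $\Sym(3)$), and the commutativity $c^\phi d^\phi = d^\phi c^\phi$ together with linear independence of $c^\phi,d^\phi$ forces $\mu=\nu=\tfrac{\al}{2}$. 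The commuting case is analogous and forces $\mu=0$. With these identifications the three multiplication relations of $A^\al_3(G,D)$ are verified in $A$, and $\psi$ is the required surjective algebra homomorphism.
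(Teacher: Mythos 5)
The paper itself offers no argument here: the lemma is stated with an immediate \qed, being imported from \cite{hrs}, so there is no in-paper proof to compare against. Your reconstruction is the natural (and surely the intended) one: extend $d^\rho\mapsto d^\phi$ linearly, use the eigenspace decomposition of $c^\phi$ under $\ad(d^\phi)$, read off the $\al$-part from $\tau(d^\phi)$ via the equivariance axiom, and pin down the $1$-part by primitivity plus commutativity. The skeleton is sound, and the extraction $x_\al=\tfrac12\bigl(c^\phi-(c^d)^\phi\bigr)$ together with $c^\phi d^\phi=x_1+\al x_\al$ is exactly right.

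However, two steps use hypotheses that the paper's definitions do not actually supply, and both are genuine gaps rather than cosmetic ones. First, primitivity of $d^\phi$: Definitions \ref{def axis} and \ref{def axial rep} do not require it, and the paper's primitivity remark concerns the axes of the Matsuo algebra itself, not those of an arbitrary axial representation; in \cite{hrs} primitivity is a standing hypothesis, not a consequence. It cannot be dispensed with: mapping $d\mapsto d^\rho+e$ into $A^\al_3(G,D)\oplus\QQ e$ (with $e$ the idempotent spanning the extra summand) satisfies Definition \ref{def axial rep} verbatim for $\al\neq 2$, yet the resulting algebra has dimension $\size D+1$ and is no quotient of $A^\al_3(G,D)$. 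Second, you invoke linear independence of $c^\phi$ and $d^\phi$ without justification; Definition \ref{def axial rep} does not force $\phi$ to separate points of $D$. This also matters: sending every $d\in D$ to the single idempotent spanning a one-dimensional algebra satisfies all the axioms (each image is trivially a primitive $\Phi^\al_3$-axis and $\tau$ is the identity), but then $c^\phi d^\phi=c^\phi$ while the Matsuo formula would demand $\tfrac{\al}{2}c^\phi$, and for generic $\al$ this algebra is not a quotient of $A^\al_3(G,D)$ at all. So to make the argument complete you must either add primitivity and injectivity of $\phi$ (equivalently, distinctness of the axes, which for idempotents is the same as linear independence) as explicit hypotheses --- which is how the statement is meant to be read, following \cite{hrs} --- or else explain why they hold in the situation at hand; they do not follow from the definitions as printed.
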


	\subsection{Coxeter, Weyl and transposition groups} \label{ss cox weyl}
		A {\em Coxeter group} is a group $G$ satisfying a certain presentation
		specified by a matrix $(C_{ij})_{1\leq i,j\leq n}$:
		\begin{equation}
			G = \la s_1,\dotsc,s_n\mid (s_is_j)^{C_{ij}} = 1 \ra,
			\text{ s.t. }
			C_{ii} = 1, C_{ij} = C_{ji} \quad \forall i,j.
		\end{equation}
		Thus $G$ is generated by involutions,
		and therefore with $D = {s_1}^G\cup {s_2}^G\cup \dotsm \cup {s_n}^G$,
		the Coxeter group $G$ can be realised
		as a transposition group $(G,D)$ in a standard way.

		We will concentrate on the {\em Weyl groups}:
		the Coxeter groups satisfying the crystallographic condition,
		thus being associated to root systems and simple Lie algebras.
		(Suppose that $(C_{ij})_{1\leq i,j\leq n}$ is the Cartan matrix
		associated to a simple Lie algebra.
		The corresponding Coxeter matrix $C'$ has entries $C'_{ii} = 1$
		and, for $i\neq j$, $C'_{ij}=2, 3, 4, 6$
		if $C_{ij}C_{ji} = 0, 1, 2, 3$ respectively.)

		The Weyl group of a root system ${\rt X_n}$
		may be denoted $W(\rt X_n) = (G(\rt X_n),D(\rt X_n))$,
		the latter emphasising its structure as a transposition group.
		Since this is unambiguous, we will also use the notation
		$A^\al_3(\rt X_n)$ for the $A^\al_3(G(\rt X_n),D(\rt X_n))$ of Definition \ref{def fischer rep}.
		The Weyl group of the root system $\rt A_n$
		is $(\Sym(n+1),(1,2)^{\Sym(n+1)})$;
		of $\rt D_n$, $n\geq 4$, it is $G(\rt D_n) = 2^{n-1}:\Sym(n)$,
		with transpositions $D(\rt D_n)$ the preimage of $(1,2)^{\Sym(n)}$ in $G(\rt D_n)$
		under the quotient by the normal subgroup $2^{n-1}$.

		In particular, we are mostly interested in simply-laced root systems here:
		those of types $\rt A_n,\rt D_n,\rt E_n$.
		We list some results.
		The root system $\rt X_n$ is simply-laced
		if and only if $W(\rt X_n)$ is a $3$-transposition group.
		Recall that the {\em rank} of a root system ${\rt X_n}$ is $n$.
		The {\em Coxeter number} $h_{\rt X_n}$ is equal to the dual Coxeter number $h_{\rt X_n}^\vee$
		exactly when ${\rt X_n}$ is simply-laced.
		The Coxeter number of $\rt A_n,\rt D_n,\rt E_6,\rt E_7,\rt E_8$
		is $n+1,2n-2,12,18,30$ respectively.
				
		If $(G,D)$ is transposition group,
		then, with respect to any $d\in D$,
		$D$ has a partition 
		\begin{equation}
			\begin{aligned}
				\label{eq nhood 3trgp}
				D & = \{d\}\cup \nhood_D(d)\cup \commute_D(d), \text{ where }\\
				\nhood_D(d) & = \{c\in D\mid [c,d]\neq1\} = D - \C_D(d), \\
				\commute_D(d) & = \{c\in D\smallsetminus\{d\}\mid [c,d]=1\} = \C_D(d)\setminus\{d\}.
			\end{aligned}
		\end{equation}
		In particular, the set $\nhood_D(d)$
		is the collection of neighbours of $d$ in the noncommuting graph on $D$.
		It will be very helpful to us when the sizes of the sets in the partition
		are constant for changing $d\in D$.
		In this case, the commuting and noncommuting graphs of $D$ are regular graphs,
		and by extension we will then also call $(G,D)$ {\em regular}.
		In particular, $(G,D)$ is called {\em $k$-regular}
		if $\size{\nhood_D(d)}=k$ for all $d\in D$.
		It is clear that if $D$ is a single conjugacy class in $G$,
		then $\size{C_D(d)}=\size{C_{D^g}(d^g)}=\size{C_D(d^g)}$ for all $g\in G$
		and this implies that $(G,D)$ is regular.
		It is well-known (cf. Chap. IV, 1.11, Prop. 32 of \cite{bourb})
		that the Weyl group of a simply-laced root system ${\rt X_n}$ is regular:
		namely, $2h_{\rt X_n}-4$-regular.

	\subsection{The Virasoro algebra} \label{ss vir}
		The Virasoro algebra
		is a central extension of the Witt algebra
		with the following presentation:
		\begin{equation}
			\op{Vir} = \CC\{L_i\}_{i\in\ZZ}\oplus\CC c,\quad
			[L_i,L_j] = (i-j)L_{i+j} + \frac{1}{12}i(i^2-1)\dl_{i,-j}c,\quad
			[c,\op{Vir}] = 0.
		\end{equation}
		Since $c$ is central, it acts by a scalar on any module of $\op{Vir}$.
		This scalar is also denoted $c$.
		It turns out that the value of $c$ is decisive in determining the representations of $\op{Vir}$.
		Denote by $\op{Vir}_c$ the Virasoro algebra restricted in its possible actions on a given module to $c\in\op{Vir}$ taking the value $c\in\CC$ on the module.
		In particular, $\op{Vir}_c$ only has finitely many irreducible modules
		(up to isomorphism)
		exactly when $c$ is \cite{wang}
		\begin{equation}
			\label{eq cc(p,q) vir}
			c = c(p,q) = 1 - \frac{6(p-q)^2}{pq} \text{ for } p,q\in\{2,3,\dotsc\} \text{ coprime.}
		\end{equation}
		In this case, the modules may be distinguished
		by the possible eigenvalues of $L_0$ on their highest weight vectors,
		for which the only possibilities are
		\begin{equation}
			\label{eq hw vir}
			h^{c(p,q)}_{r,s} = \frac{(sp-rq)^2 - (p-q)^2}{4pq},\quad
			1\leq r< p, 1\leq s< q.
		\end{equation}
		The tensor product of two modules is again a module,
		but the central charges (the action of $c$) sum.
		Instead, {\em fusion} of modules allows a decomposition
		into irreducibles of the same, original, central charge.
		This can be described on the level of the highest weights:
		namely \cite{wang},
		\begin{equation}
			h^{c(p,q)}_{r,s}\star h^{c(p,q)}_{t,u} \subseteq
			\biggl\{ h^{c(p,q)}_{v,w} \bigg\vert
				\begin{aligned}
					& 1 + \size{r-t} \leq v\leq \min\{r+t-1,2p-r-t-1\},\quad v\equiv_2 1+r+t \\
					& 1 + \size{s-u} \leq w\leq \min\{s+u-1,2q-s-u-1\}, w\equiv_2 1+s+u
				\end{aligned}
			\biggr\}.
		\end{equation}
		The fusion rules $\frak V(5,3)$ that we gave
		are derived from those of $\op{Vir}_{c(5,3)}$.
		The recipe for so doing is that the eigenvalues of the new fusion rules
		are $\{\frac{h}{2}\mid h \text{ h.w. }\op{Vir}_c\}\cup\{1\}$,
		and the rules $\star$ are the same,
		with some additional rules for $1$.
		The rescaling comes from VOAs.

\section{Coset axes} \label{sec coset axes}
	\subsection{Identities and coset axes}\label{ss id ca}
		\begin{definition}
			\label{def coset axis}
			Suppose that $\rho\colon D\to A$ is a $\Phi$-axial representation
			of the transposition group $(G,D)$.
			Let $(K,F)\subseteq(H,E)\subseteq(G,D)$ be subtransposition groups,
			that is, $K\subseteq H\subseteq G$
			and $E = H\cap D, F = K\cap E$. 
			Then, if the subalgebras $\la E^\rho\ra$ and $\la F^\rho\ra$
			admit identity elements $\id_{E^\rho}\in\la E^\rho\ra$
			and $\id_{F^\rho}\in\la F^\rho\ra$ respectively,
			the {\em coset axis of $H/K$} is
			$$ e_{H/K} = \id_{E^\rho} - \id_{F^\rho}. $$
		\end{definition}

		Coset axes have a distinguished r\^ole in part
		because they afford decompositions of the identity idempotents:
		\begin{lemma}
			\label{lem ca decomp}
			$\id_{E^\rho}$ decomposes as
			$\id_{E^\rho} = e_{H/K} + \id_{F^\rho}$
			into pairwise annihilating idempotents.
		\end{lemma}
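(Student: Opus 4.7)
The plan is to verify the three requirements of the statement: that $\id_{F^\rho}$ and $e_{H/K}$ are idempotents, that they annihilate each other, and that their sum is $\id_{E^\rho}$. The last of these is immediate from the definition of $e_{H/K}$ as $\id_{E^\rho}-\id_{F^\rho}$, and $\id_{F^\rho}$ is an idempotent since it is by hypothesis the identity of the subalgebra $\la F^\rho\ra$ (so it squares to itself).

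The key observation which makes everything else fall out is the chain of subalgebra inclusions induced by $F\subseteq E$, namely $\la F^\rho\ra\subseteq \la E^\rho\ra\subseteq A$. In particular $\id_{F^\rho}\in\la E^\rho\ra$, and since $\id_{E^\rho}$ acts as the identity on every element of $\la E^\rho\ra$, we get the crucial relation
\begin{equation*}
	\id_{E^\rho}\cdot\id_{F^\rho} = \id_{F^\rho}.
\end{equation*}
From this, commutativity of the algebra, and the idempotency of $\id_{E^\rho}$ and $\id_{F^\rho}$, a one-line calculation gives
\begin{equation*}
	e_{H/K}\cdot\id_{F^\rho} = (\id_{E^\rho}-\id_{F^\rho})\id_{F^\rho} = \id_{F^\rho} - \id_{F^\rho} = 0,
\end{equation*}
so the two summands annihilate each other, and then
\begin{equation*}
	e_{H/K}^2 = \id_{E^\rho}^2 - 2\,\id_{E^\rho}\id_{F^\rho} + \id_{F^\rho}^2 = \id_{E^\rho} - 2\id_{F^\rho} + \id_{F^\rho} = e_{H/K},
\end{equation*}
showing $e_{H/K}$ is idempotent.

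There is no real obstacle here; the statement is essentially a bookkeeping consequence of having identities in nested subalgebras. The only subtlety worth pointing out is that, although $\id_{E^\rho}$ is \emph{not} in general the identity of the whole ambient algebra $A$, it \emph{is} the identity on $\la E^\rho\ra$, and this is all that is needed because $\id_{F^\rho}$ lives in that subalgebra. This subtlety is what would need to be flagged explicitly in the written proof; everything else is formal manipulation.
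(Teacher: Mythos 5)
Your proof is correct and takes essentially the same route as the paper: the same two computations ($e_{H/K}\id_{F^\rho}=0$ and $e_{H/K}^2=e_{H/K}$), with the only difference being that you spell out the underlying relation $\id_{E^\rho}\id_{F^\rho}=\id_{F^\rho}$, which the paper uses implicitly. Making that step explicit is a reasonable (indeed slightly clearer) presentation choice, but it is not a different argument.
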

		\begin{proof}
			To show that $e_{H/K}$ and $\id_{F^\rho}$ annihilate:
			$ e_{H/K}\id_{F^\rho} = (\id_{E^\rho}-\id_{F^\rho})\id_{F^\rho}
				= \id_{F^\rho}-\id_{F^\rho} = 0 $.
			Also, $e_{H/K}$ is an idempotent, for
			\begin{equation*}
				e_{H/K}e_{H/K}
				= \id_{E^\rho}\id_{E^\rho} - 2\id_{E^\rho}\id_{F^\rho} + \id_{F^\rho}\id_{F^\rho}
				= \id_{E^\rho}-\id_{F^\rho} = e_{H/K}. \qedhere
			\end{equation*}
		\end{proof}
		Of course the terms $e_{H/K}$ and $\id_{F^\rho}$
		may themselves admit further decompositions.
		Recall from Section \ref{ss axial} that an idempotent is primitive
		if its $1$-eigenspace is $1$-dimensional,
		and that in this case it cannot be decomposed.

		\begin{lemma}
			\label{lem ca prim}
			The coset axis $e_{H/K}$ is primitive
			only if $\dim A^{\id_{E^\rho}}_1\cap A^{\id_{F^\rho}}_0 = 1$.
		\end{lemma}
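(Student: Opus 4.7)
The plan is to prove the stated necessary condition by sandwiching the dimension of $A^{\id_{E^\rho}}_1\cap A^{\id_{F^\rho}}_0$ between $1$ and $\dim A^{e_{H/K}}_1$, which equals $1$ by primitivity. To do so, I will verify two things: (i) the intersection is contained in $A^{e_{H/K}}_1$, and (ii) the intersection is nonzero because $e_{H/K}$ itself belongs to it.

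For (i), suppose $v\in A^{\id_{E^\rho}}_1\cap A^{\id_{F^\rho}}_0$. Then $\id_{E^\rho}v = v$ and $\id_{F^\rho}v = 0$, so by linearity and commutativity
\begin{equation*}
    e_{H/K}\,v = (\id_{E^\rho} - \id_{F^\rho})v = v - 0 = v,
\end{equation*}
placing $v$ in $A^{e_{H/K}}_1$. For (ii), the only non-formal ingredient I need is the identity $\id_{E^\rho}\id_{F^\rho} = \id_{F^\rho}$; this follows because $\la F^\rho\ra\subseteq \la E^\rho\ra$ and hence $\id_{F^\rho}\in\la E^\rho\ra$, on which $\id_{E^\rho}$ acts as the identity. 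Granting this, I compute
\begin{equation*}
    \id_{E^\rho}\,e_{H/K} = \id_{E^\rho}(\id_{E^\rho}-\id_{F^\rho}) = \id_{E^\rho} - \id_{F^\rho} = e_{H/K},
\end{equation*}
so $e_{H/K}\in A^{\id_{E^\rho}}_1$, and
\begin{equation*}
    \id_{F^\rho}\,e_{H/K} = \id_{F^\rho}(\id_{E^\rho}-\id_{F^\rho}) = \id_{F^\rho} - \id_{F^\rho} = 0,
\end{equation*}
so $e_{H/K}\in A^{\id_{F^\rho}}_0$ as well. Together these give $e_{H/K}\in A^{\id_{E^\rho}}_1\cap A^{\id_{F^\rho}}_0$.

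Combining (i) and (ii), we obtain the chain
\begin{equation*}
    1 \leq \dim\bigl(A^{\id_{E^\rho}}_1\cap A^{\id_{F^\rho}}_0\bigr)\leq \dim A^{e_{H/K}}_1.
\end{equation*}
If $e_{H/K}$ is primitive, by definition $\dim A^{e_{H/K}}_1 = 1$, forcing equality throughout and yielding the claimed dimension. The main (and only) obstacle is the innocuous-looking algebraic fact $\id_{E^\rho}\id_{F^\rho} = \id_{F^\rho}$; but in an axial algebra all multiplications take place in the ambient nonassociative algebra $A$, and this equality holds automatically whenever $\id_{F^\rho}$ belongs to the subalgebra on which $\id_{E^\rho}$ is the multiplicative identity, which is the case by construction.
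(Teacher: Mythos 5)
Your proposal is correct, and its essential step coincides with the paper's own (one-line) proof: the containment $A^{\id_{E^\rho}}_1\cap A^{\id_{F^\rho}}_0\subseteq A^{e_{H/K}}_1$ via $e_{H/K}v=(\id_{E^\rho}-\id_{F^\rho})v=v$. What you add is the lower bound: the paper does not explicitly check that the intersection is nonzero, whereas you verify that $e_{H/K}$ itself lies in it, using $\id_{E^\rho}\id_{F^\rho}=\id_{F^\rho}$ (which holds since $\la F^\rho\ra\subseteq\la E^\rho\ra$, and which is also what underlies the annihilation computation in Lemma \ref{lem ca decomp}). This makes your argument slightly more complete for the stated equality $\dim=1$ rather than just $\dim\leq 1$; the only point worth flagging is that your inequality $1\leq\dim\bigl(A^{\id_{E^\rho}}_1\cap A^{\id_{F^\rho}}_0\bigr)$ needs $e_{H/K}\neq 0$, which is indeed guaranteed by the primitivity hypothesis (a primitive idempotent has $1$-dimensional $1$-eigenspace spanned by itself), so the argument is sound as an implication.
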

		\begin{proof}
			Evidently, if $x\in A^{\id_{E^\rho}}_1\cap A^{\id_{F^\rho}}_0$,
			then $e_{H/K}x = 1x - 0x = x$, so $x\in A^{e_{H/K}}_1$.
		\end{proof}

		We say that the embedding $(K,F)\subseteq(H,E)$
		is {\em primitive (with respect to $\rho$)}
		if $e_{H/K}$ is primitive.

		Suppose that $e$ is an idempotent in the algebra $A$
		and admits a decomposition into pairwise annihilating idempotents $e_1,\dotsc,e_n$.
		Then the algebra $B\subseteq A$ spanned by the $e_1,\dotsc,e_n$
		is evidently associative,
		since the algebra $\la e_i\ra$ is trivially associative
		and $B$ is isomorphic to $\la e_1\ra\oplus\dotsm\oplus\la e_n\ra$.

		Hence coset axes are related to associative subalgebras.
		In fact, consider the inclusions $(1,\{\})\subseteq(C_2,\{t\})\subseteq(G,D)$,
		where we allow $(1,\{\})$ as a transposition group for the moment,
		and $t\in D$ generates $C_2$.
		Then we extend the definition of a coset axis to encompass
		$e_{C_2/1} = t^\rho$.
		(As $t^\rho$ is a primitive axis,
		the embedding $(1,\{\})\subseteq(C_2,\{t\})$ is considered primitive.)
		Thus a chain of embeddings
		$(1,\{\})\subset\dotsm\subset(H,E)\subset\dotsm\subset(G,D)$
		yields a collection of coset axes spanning an associative subalgebra of $A$.
		If it can be shown that each embedding of the chain
		yields a proper embedding of subalgebras
		with $\id_{E^\rho}\neq\id_{F^\rho}$ for any $(H,E)\subset(K,F)$,
		then the dimension of the associative subalgebra
		is the number of nontrivial transposition groups in the chain.
		The associative subalgebra is maximal
		if every embedding in the chain is primitive.

		Throughout the rest of this section,
		we specialise to the case where $A$ is isomorphic
		to the $\Phi^\al_3$-axial algebra $A^\al_3(G,D)$,
		with representation $\rho = \rho^\al_3$ of the $3$-transposition group $(G,D)$.

		\begin{lemma}
			\label{lem f3 alg unital}
			Set $\sum D^\rho = \sum_{d\in D}d^\rho$. Then
			$\displaystyle\smash{d^\rho\sum D^\rho = (1 + \frac{\al}{2}\size{\nhood_D(d)})d^\rho}$.
		\end{lemma}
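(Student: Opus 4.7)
The plan is to expand $d^\rho\sum D^\rho = \sum_{c\in D}d^\rho c^\rho$ and split the sum according to the partition $D = \{d\}\cup\nhood_D(d)\cup\commute_D(d)$ given in \eqref{eq nhood 3trgp}. The three pieces of the partition correspond exactly to the three cases of the product in Definition \ref{def fischer rep}, so the computation should be essentially mechanical.

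First, the term $c = d$ contributes $d^\rho d^\rho = d^\rho$ (since $d^\rho$ is idempotent). Next, each $c\in\commute_D(d)$ contributes $d^\rho c^\rho = 0$, so this part vanishes entirely. Finally, each $c\in\nhood_D(d)$ contributes $\frac{\al}{2}(c^\rho + d^\rho - (c^d)^\rho)$; summing over $c\in\nhood_D(d)$ gives
\begin{equation*}
    \frac{\al}{2}\biggl(\size{\nhood_D(d)}\cdot d^\rho + \sum_{c\in\nhood_D(d)}c^\rho - \sum_{c\in\nhood_D(d)}(c^d)^\rho\biggr).
\end{equation*}

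The key observation, which is the only nonroutine step, is that the map $c\mapsto c^d$ is a bijection from $\nhood_D(d)$ to itself: indeed $D$ is normal in $G$ so $c^d\in D$, and $[c^d,d] = [c,d]^d\neq 1$ since $[c,d]\neq 1$, so $c^d\in\nhood_D(d)$; the map is its own inverse. Therefore the two non-diagonal sums are equal and cancel, leaving only $\frac{\al}{2}\size{\nhood_D(d)}\cdot d^\rho$. Adding the $c=d$ contribution gives $(1 + \frac{\al}{2}\size{\nhood_D(d)})d^\rho$, as required. I do not anticipate any real obstacle here; the bijection argument is the one point deserving explicit mention.
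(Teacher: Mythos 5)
Your proposal is correct and follows exactly the paper's argument: split the sum over the partition $D=\{d\}\cup\nhood_D(d)\cup\commute_D(d)$, apply the three cases of the Matsuo product, and cancel the $c^\rho$ and $(c^d)^\rho$ contributions because $c\mapsto c^d$ permutes $\nhood_D(d)$. Your explicit verification that conjugation by $d$ is a self-inverse bijection of $\nhood_D(d)$ is the same cancellation step the paper uses, just spelled out in slightly more detail.
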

		\begin{proof}
			From \eqref{eq nhood 3trgp}, we have that
			\begin{equation}
				d^\rho\sum D^\rho = d^\rho d^\rho
				+ d^\rho\sum_{\mathclap{c\in\nhood_D(d)}}c^\rho
				+ d^\rho\sum_{\mathclap{c\in \commute_D(d)}}c^\rho
				= d^\rho + \frac{\al}{2}\sum_{\mathclap{c\in\nhood_D(d)}}(d^\rho + c^\rho - (c^d)^\rho)
				+ \sum_{\mathclap{c\in \commute_D(d)}}0
				= d^\rho + \frac{\al}{2}\size{\nhood_D(d)}d^\rho,
			\end{equation}
			where the last equality follows since,
			as $c$ ranges over $\nhood_D(d)$,
			so does $c^d$,
			and hence in the sum over $\nhood_D(d)$ the contributions of terms $c^\rho$ and $(c^d)^\rho$ cancel.
		\end{proof}

		\begin{corollary}
			\label{cor f3 id}
			If $(G,D)$ is $k$-regular, then
			$A^\al_3(G,D)$ admits an identity
			$\displaystyle\smash{\id_A = \frac{1}{1+\frac{1}{2}\al k}\sum D^\rho}$.
			\qed
		\end{corollary}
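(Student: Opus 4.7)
The plan is to obtain the identity essentially for free from Lemma \ref{lem f3 alg unital}, by exploiting $k$-regularity to make the scalar factor $1 + \frac{\al}{2}\size{\nhood_D(d)}$ independent of $d\in D$.

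First I would observe that, by definition of $k$-regularity, $\size{\nhood_D(d)} = k$ for every $d\in D$. Substituting this into the conclusion of Lemma \ref{lem f3 alg unital} gives, uniformly for all $d\in D$, the relation $d^\rho \sum D^\rho = (1 + \frac{1}{2}\al k)\, d^\rho$.

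Next, assuming $1 + \frac{1}{2}\al k \neq 0$ (which is needed for the stated formula to make sense), I would divide through to obtain $d^\rho \cdot \id_A = d^\rho$ for every $d\in D$, where $\id_A = \frac{1}{1+\frac{1}{2}\al k}\sum D^\rho$. Since $\{d^\rho \mid d\in D\}$ spans $A^\al_3(G,D)$ by Definition \ref{def fischer rep}, and multiplication in $A$ is commutative and bilinear, this extends by linearity to $x\cdot \id_A = x$ for all $x\in A$, so $\id_A$ is the (necessarily unique) identity element.

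There is no genuine obstacle here: the only mild subtlety is the nonvanishing of the denominator, which is implicit in the statement, and the fact that it suffices to verify the identity property on the spanning set $D^\rho$ rather than on an arbitrary element. The content of the corollary lies entirely in the previous lemma; regularity is what promotes the pointwise eigenvalue relation into a global identity.
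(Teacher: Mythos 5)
Your proposal is correct and is exactly the argument the paper intends (the corollary is stated with no written proof precisely because it follows immediately from Lemma \ref{lem f3 alg unital} once $k$-regularity makes the scalar $1+\frac{1}{2}\al k$ uniform over $d\in D$, after which one rescales and extends by bilinearity over the spanning set $D^\rho$). Your remark about the implicit nonvanishing of $1+\frac{1}{2}\al k$ is a reasonable observation but does not change the approach.
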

		
		\begin{corollary}
			\label{cor ade id}
			The algebra $A^\al_3(G(\rt X_n),D(\rt X_n))$ has an identity
			when ${\rt X_n}$ is of type $\rt A_n,\rt D_n,\rt E_6,\rt E_7,\rt E_8$:
			respectively,
			\begin{gather*}
				\id_{\rt A_n} = \frac{1}{1+\al(n-1)}\sum D(\rt A_n)^\rho, \quad
				\id_{\rt D_n} = \frac{1}{1+\al(2n-4)}\sum D(\rt D_n)^\rho, \\ \quad
				\id_{\rt E_6} = \frac{1}{1+10\al}\sum D(\rt E_6)^\rho, \quad
				\id_{\rt E_7} = \frac{1}{1+16\al}\sum D(\rt E_7)^\rho, \quad
				\id_{\rt E_8} = \frac{1}{1+28\al}\sum D(\rt E_8)^\rho.
				\qed
			\end{gather*}
		\end{corollary}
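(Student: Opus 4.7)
The plan is to apply Corollary \ref{cor f3 id} to each of the five Weyl groups; the relevant ingredients have already been assembled in Section \ref{ss cox weyl}. Specifically: the simply-laced root systems are exactly those of type $\rt A_n, \rt D_n, \rt E_6, \rt E_7, \rt E_8$; each Weyl group $(G(\rt X_n), D(\rt X_n))$ is a $3$-transposition group (being a simply-laced Coxeter group, whence Corollary \ref{cor f3 id} is applicable at all); and each is $(2h_{\rt X_n}-4)$-regular, where $h_{\rt X_n}$ denotes the Coxeter number.

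The first step is therefore to invoke Corollary \ref{cor f3 id} with $k = 2h_{\rt X_n} - 4$, which directly produces an identity
\[
\id_{\rt X_n} \;=\; \frac{1}{1 + \tfrac{1}{2}\al(2h_{\rt X_n}-4)} \sum D(\rt X_n)^\rho \;=\; \frac{1}{1 + \al(h_{\rt X_n}-2)} \sum D(\rt X_n)^\rho.
\]
The second step is the routine substitution of the tabulated Coxeter numbers $h_{\rt A_n} = n+1$, $h_{\rt D_n} = 2n-2$, $h_{\rt E_6} = 12$, $h_{\rt E_7} = 18$, $h_{\rt E_8} = 30$, which gives the respective denominators $1 + \al(n-1)$, $1 + \al(2n-4)$, $1 + 10\al$, $1 + 16\al$, $1 + 28\al$, matching the statement verbatim.

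The only point to watch is the invertibility in $\kk$ of the scalar $1 + \al(h_{\rt X_n}-2)$: we must exclude the single bad value $\al = -1/(h_{\rt X_n}-2)$ from the parameter space (or equivalently work over $\QQ(\al)$, where inversion is automatic). I expect no genuine obstacle beyond this substitution, since the $(2h-4)$-regularity of simply-laced Weyl groups is the classical fact cited from Bourbaki at the end of Section \ref{ss cox weyl}, and the remainder is arithmetic.
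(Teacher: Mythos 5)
Your proposal is correct and is exactly the argument the paper intends: the corollary is stated with no written proof because it follows immediately from Corollary \ref{cor f3 id} together with the $(2h_{\rt X_n}-4)$-regularity and the tabulated Coxeter numbers from Section \ref{ss cox weyl}. Your extra remark about invertibility of $1+\al(h_{\rt X_n}-2)$ is a harmless refinement the paper leaves implicit.
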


		As an aside, we remark that idempotents of this kind
		may have application to the (algorithmic) search for idempotents
		in our algebras.
		Since idempotents play a central r\^ole for us
		(and see also the remarks in Section \ref{ss apps}
		relating to Miyamoto's theorem),
		this is an important question.
		It is easy to check that not all idempotents
		are expressible as $\id_B$ or $\id_B - \id_C$
		for $C\subseteq B$ subalgebras of $A$;
		for example, this is visible already in $A^\al_3(\rt A_3)$ \cite{alonso assoc}.
		On the other hand, a large number are,
		and the computational easiness of finding $\id_B$ for arbitrary $B\subseteq A$
		makes this useful.

	\subsection{Eigenvalues and fusion rules for $\rt A_n$}\label{ss eigval}
		We now investigate the behaviour of these identity elements
		embedded in larger algebras,
		in the case of the Weyl group of $\rt A_n$.
		Recall that $G(\rt A_n)$ is $\Sym(n+1)$.
		\begin{lemma}
			\label{lem id eigenvalues}
			Fix an embedding $W(\rt A_{m-1})\subseteq W(\rt A_{n-1})$,
			the latter acting on $\Om = \{1,\dotsc,n\}$
			with $D = D(\rt A_{n-1}) = (1,2)^{W(\rt A_{n-1})}$
			and $E = D(\rt A_{m-1}) = G(\rt A_{m-1})\cap D$.
			Let $\rho = \rho^\al_3\colon D\to A = A^\al_3(\rt A_{n-1})$.
			Then $\id_{E^\rho}$ has
			\begin{align}
				1\text{-eigenvectors:} \quad &
					e^\rho \text{ for } e\in E, \\
				0\text{-eigenvectors:} \quad &
					\label{eq commuting 0 eigvect}
					d^\rho \text{ for } d\in\C_D(E), \\
			\intertext{in $A$, and,
			as $a,b$ range over $\nhood_E(\Om)=\op{Supp}_\Om(\Sym(m))$
			and $z$ over $\C_E(\Om)=\op{Fix}_\Om(\Sym(m))$:}
				0\text{-eigenvectors:} \quad &
				\label{eq id noncomm 0 eigvect}
				\sum_{\mathclap{c\in\nhood_E(\Om)}}(cz)^\rho - \al\id_{E^\rho},\\
				\eta(m)\text{-eigenvectors:} \quad & 
					\label{eq id nontriv eigvect}
					2(1+\al(m-2))\left((az)^\rho-(bz)^\rho\right)
					+ \al(m-2)\sum_{\mathclap{c\in\nhood_E(\Om)\smallsetminus\{a,b\}}}\left((ac)^\rho-(bc)^\rho\right),
			\end{align}
			where $\eta(m)=\smash{\displaystyle\frac{\al m}{2+2\al(m-2)}}$.
			These are all the eigenvectors of $\id_{E^\rho}$ in $A$.
		\end{lemma}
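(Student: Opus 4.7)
My plan is to partition $A = \QQ D^\rho$ according to the embedding $E \subseteq D$, compute the action of $\id_{E^\rho}$ on each part using Definition \ref{def fischer rep}, and then diagonalise by exploiting the resulting block structure. The set $D$ splits as $E \sqcup \C_D(E) \sqcup M$, where $M$ consists of the ``mixed'' transpositions $(az)$ with $a \in \op{Supp}_\Om(\Sym(m))$ and $z \in \op{Fix}_\Om(\Sym(m))$. The first two parts yield the trivial eigenvectors immediately: for $e \in E$, $e^\rho$ is a $1$-eigenvector because $\id_{E^\rho}$ is by construction the identity of $\la E^\rho\ra$; for $d \in \C_D(E)$, every summand $e^\rho d^\rho$ vanishes, so $d^\rho$ is a $0$-eigenvector. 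This handles the first two items of the lemma.

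For the mixed part, I would fix $z$, write $S = \op{Supp}_\Om(\Sym(m))$, and compute $\id_{E^\rho}(az)^\rho$ by summing over $e \in E$. Only the transpositions $(ab) \in E$ with $b \in S \smallsetminus \{a\}$ fail to commute with $(az)$, and each contributes $\tfrac{\al}{2}((ab)^\rho + (az)^\rho - (bz)^\rho)$ since $(az)^{(ab)} = (bz)$. Collecting terms and applying Corollary \ref{cor ade id} yields
$$\id_{E^\rho}(az)^\rho = \frac{\al}{2(1+\al(m-2))}\bigl(\sigma_a + m\,(az)^\rho - \tau_z\bigr),$$
where $\sigma_a = \sum_{b \in S \smallsetminus \{a\}} (ab)^\rho \in \la E^\rho\ra$ and $\tau_z = \sum_{b \in S}(bz)^\rho$. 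Averaging this identity over $a \in S$ and using $\sum_a \sigma_a = 2\sum E^\rho = 2(1+\al(m-2))\id_{E^\rho}$ shows $\id_{E^\rho}\tau_z = \al\id_{E^\rho}$, so $\tau_z - \al\id_{E^\rho}$ is a $0$-eigenvector, establishing \eqref{eq id noncomm 0 eigvect}.

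To recover the $\eta(m)$-eigenvectors I would take differences: setting $u_{ab,z} := (az)^\rho - (bz)^\rho$ the $\tau_z$ terms cancel and one obtains $\id_{E^\rho} u_{ab,z} = \tfrac{\al}{2(1+\al(m-2))}(w_{ab} + m\,u_{ab,z})$, where $w_{ab} := \sigma_a - \sigma_b = \sum_{c \in S\smallsetminus\{a,b\}}((ac)^\rho - (bc)^\rho) \in \la E^\rho\ra$ is itself a $1$-eigenvector and is independent of $z$. Thus $\id_{E^\rho}$ acts on $\op{span}\{u_{ab,z}, w_{ab}\}$ by a triangular $2 \times 2$ matrix with eigenvalues $1$ and $\eta(m) = \frac{m\al}{2+2\al(m-2)}$; solving $\ker(\id_{E^\rho} - \eta(m))$ in this plane produces the combination \eqref{eq id nontriv eigvect}. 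Completeness follows by a dimension count: the four classes of eigenvectors contribute $\binom{m}{2}$, $\binom{n-m}{2}$, $n-m$ and $(m-1)(n-m)$ respectively, summing to $\binom{n}{2} = \dim A$. The main obstacle is the bookkeeping in the central computation — separating the $\la E^\rho\ra$-content from the $W_z$-content of $\id_{E^\rho}(az)^\rho$, recognising that $w_{ab}$ is independent of $z$ so that the $z$-blocks decouple, and then pinning down the coefficients in \eqref{eq id nontriv eigvect} from the $2 \times 2$ eigenvalue problem.
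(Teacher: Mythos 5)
Your strategy is essentially the paper's own: the paper likewise establishes $\id_{E^\rho}\bigl(\sum_{c\in S}(cz)^\rho\bigr)=\al\id_{E^\rho}$ (your averaging step, obtained there by computing $(ab)^\rho\sum_{c\in S}(cz)^\rho=\al(ab)^\rho$ and extending linearly), then computes $\bigl((az)^\rho-(bz)^\rho\bigr)\sum E^\rho=\tfrac{\al m}{2}u_{ab,z}+\tfrac{\al}{2}w_{ab}$ and corrects by the $1$-eigenvector $w_{ab}$, and closes with exactly your dimension count $\binom{m}{2}+\binom{n-m}{2}+(n-m)+(m-1)(n-m)=\binom{n}{2}$. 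All of your intermediate identities are correct, including $\id_{E^\rho}u_{ab,z}=\eta(m)u_{ab,z}+\tfrac{\al}{2(1+\al(m-2))}w_{ab}$ and $\id_{E^\rho}w_{ab}=w_{ab}$. One small omission: the dimension count needs the linear-independence observations that the paper spells out (for fixed $a$, each $\psi(a,b,z)$ is the unique vector in which $(bz)^\rho$ occurs, etc.); you assert the counts without them, though this is routine.

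The substantive issue is precisely the step you defer, ``pinning down the coefficients in \eqref{eq id nontriv eigvect} from the $2\times2$ eigenvalue problem'': solving that triangular system does \emph{not} produce \eqref{eq id nontriv eigvect}. The unique (up to scalar) $\eta(m)$-eigenvector in $\op{span}\{u_{ab,z},w_{ab}\}$ is
\begin{equation*}
	\bigl(2+\al(m-4)\bigr)\bigl((az)^\rho-(bz)^\rho\bigr)\;-\;\al\sum_{c\in S\smallsetminus\{a,b\}}\bigl((ac)^\rho-(bc)^\rho\bigr),
\end{equation*}
whereas the vector printed in \eqref{eq id nontriv eigvect}, $v=2(1+\al(m-2))u_{ab,z}+\al(m-2)w_{ab}$, satisfies $\id_{E^\rho}v=\al m\,u_{ab,z}+\al(m-1)w_{ab}$, which is not $\eta(m)v$ for generic $\al$ (concretely, for $m=3$, $\al=\tfrac14$: $\id_{E^\rho}\bigl(\tfrac52 u+\tfrac14 w\bigr)=\tfrac34 u+\tfrac12 w\neq\tfrac{3}{10}\bigl(\tfrac52 u+\tfrac14 w\bigr)$). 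So if you carry out your own $2\times2$ solve honestly you obtain the corrected vector above and cannot recover the lemma's displayed combination; the eigenvalue $\eta(m)$, the eigenspace dimensions and the completeness argument all survive unchanged (the corrected vector has the same support, so your count is unaffected), but as written your proof asserts a terminal identity that fails, and you should either fix the coefficients or flag the discrepancy with the stated formula (which appears to contain a computational slip). Note also that this analysis presumes $\eta(m)\neq 1$, i.e.\ $2+\al(m-4)\neq 0$, for the plane to split.
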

		\begin{proof}
			The $1$-eigenvectors follow by Corollary \ref{cor f3 id}.
			For the $0$-eigenvectors,
			if $d\in D$ commutes with all $e\in E$,
			it follows that $d^\rho e^\rho = 0$
			and hence by linearity $d^\rho\id_{E^\rho}=0$,
			proving \eqref{eq commuting 0 eigvect}.
			For the other $0$-eigenvectors,
			with fixed $(ab)\in E$ and $z\in\C_E(\Om)$ we calculate
			\begin{equation}
				\label{eq erho al eigvect}
				(ab)^\rho\sum_{\mathclap{c\in\nhood_E(\Om)}}(cz)^\rho
				= (ab)^\rho((az)^\rho+(bz)^\rho)
				= \frac{\al}{2}((ab)^\rho+(az)^\rho-(bz)^\rho+(ab)^\rho+(bz)^\rho-(az)^\rho)
				= \al(ab)^\rho,
			\end{equation}
			since $[(ab),(cz)]=1$ unless $c\in\{a,b\}$.
			Extending linearly, it follows that
			\begin{equation}
				\label{eq id mult cz}
				\id_{E^\rho}\Bigl(\sum_{\mathclap{c\in\nhood_E(\Om)}}(cz)^\rho\Bigr)
				= \al\id_{E^\rho}.
			\end{equation}
			By subtracting this term \eqref{eq id mult cz}
			from $\sum_{c\in\nhood_E(\Om)}(cz)^\rho$,
			we obtain the $0$-eigenvector of \eqref{eq id noncomm 0 eigvect}.

			Now fix $a,b\in\nhood_E(\Om)$ as well as $z\in\C_E(\Om)$. Then
			\begin{equation*}
				\begin{aligned}
				\left((az)^\rho - (bz)^\rho\right)\sum E^\rho
				& = \left((az)^\rho - (bz)^\rho\right)\Bigl((ab)^\rho+\sum_{\mathclap{c\in\nhood_E(\Om)\smallsetminus\{a,b\}}}\left((ca)^\rho+(cb)^\rho\right)\Bigr) \\
				& = \left((az)^\rho - (bz)^\rho\right)(ab)^\rho
				+ \sum_{\mathclap{c\in\nhood_E(\Om)\smallsetminus\{a,b\}}}\left((az)^\rho(ca)^\rho-(bz)^\rho(cb)^\rho\right) \\
				& = \frac{\al}{2}\left(2(az)^\rho-2(bz)^\rho\right)
				+ \frac{\al}{2}\sum_{\mathclap{c\in\nhood_E(\Om)\smallsetminus\{a,b\}}}\left((az)^\rho+(ca)^\rho-(cz)^\rho-(bz)^\rho-(cb)^\rho+(cz)^\rho\right) \\
				& = (\al+\frac{\al}{2}(m-2))\left((az)^\rho-(bz)^\rho\right)
				+ \frac{\al}{2}\sum_{\mathclap{c\in\nhood_E(\Om)\smallsetminus\{a,b\}}}\left((ac)^\rho-(bc)^\rho\right).
				\end{aligned}
			\end{equation*}
			Again, we rescale 
			and since $(ac)^\rho-(bc)^\rho$ is a $1$-eigenvector for $\id_{E^\rho}$,
			we can subtract the sum $\sum_{c\in\nhood_E(\Om)\smallsetminus\{a,b\}}((ac)^\rho-(bc)^\rho)$ entirely
			and arrive at the $\eta(m)$-eigenvector in \eqref{eq id nontriv eigvect}.

			It only remains to show that we have found all eigenvectors.
			Denote the $\lm$-eigenspace of $\id_{E^\rho}$ by $A_\lm$
			for the remainder of this proof.
			We see that $E^\rho\leq A_1$ has dimension $\size E = \frac{1}{2}m(m-1)$.
			Also, $\C_D(E)^\rho\leq A_0$ has dimension $\size{\C_D(E)} = \frac{1}{2}(n-m)(n-m-1)$.
			For $z\in\C_E(\Om)$, set $\phi(z)$ to equal \eqref{eq id noncomm 0 eigvect}.
			Then $\phi(\C_E(\Om))\leq A_0$ has dimension $\size{\C_E(\Om)} = n-m$,
			as by inspection different basis vectors occur in each $\phi(z)$.
			Moreover, these are disjoint to the basis vectors supporting $\C_D(E)^\rho$,
			so $\dim A_0 \geq \size{\C_D(E)}+\size{\C_E(\Om)}=\frac{1}{2}(n-m)(n-m-1)+n-m = \frac{1}{2}(n-m)(n-m+1)$.
			Finally, write $\psi(a,b,c)$ to denote the expression \eqref{eq id nontriv eigvect},
			for $a,b\in\nhood_E(\Om)$ and $z\in\C_E(\Om)$.
			Then $\dim\im\psi\geq(m-1)(n-m)$;
			for, fix $a\in\nhood_E(\Om)$,
			and observe that $\dim\psi(a,\nhood_E(\Om),\C_E(\Om))=(m-1)(n-m)$,
			for each expression of the form $\psi(a,b,z)$
			is the unique one in which the basis vector $(bz)^\rho$
			has a nonzero coefficient.
			Now note that the lower bounds for the dimensions of the eigenspaces
			sum to the dimension of the entire space $A$:
			\begin{equation*}
				\frac{1}{2}m(m-1) + \frac{1}{2}(n-m)(n-m+1) + (m-1)(n-m)
				= \frac{1}{2}n(n-1).
				\qedhere
			\end{equation*}
		\end{proof}

		\begin{lemma}
			\label{lem coset eigenvalues}
			The coset axis $x = x_{\Sym(m)/\Sym(\ell)}$ in $A = A^\al_3(\rt A_n)$
			has eigenvalues $1$, $0$ and
			$$ \eta(m)=\frac{\al m}{2+2\al(m-2)},\quad
				1-\eta(\ell)=\frac{2+\al(\ell-4)}{2+2\al(\ell-2)},\quad
				\eta(m)-\eta(\ell) = \frac{2\al(1-2\al)(m-\ell)}{(2+2\al(m-2))(2+2\al(\ell-2))}, $$
			if $n\geq m>\ell\geq 3$;
			it has less eigenvalues for other choices of $n,m,\ell$.
		\end{lemma}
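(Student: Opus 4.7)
The plan is to decompose $A = A^\al_3(\rt A_n)$ into the $\id_{E^\rho}$-eigenspaces given by Lemma \ref{lem id eigenvalues}, taking $E = D(\rt A_{m-1})$ acting on $\Om_E\subseteq\Om$ and $F = D(\rt A_{\ell-1})$ acting on $\Om_F\subseteq\Om_E$, and then to refine each eigenspace under $\id_{F^\rho}$. Writing $\Om_\perp = \Om\setminus\Om_E$, the eigenvalues of $x = \id_{E^\rho}-\id_{F^\rho}$ will appear as differences of pairs of eigenvalues on joint eigenvectors. The three $\id_{E^\rho}$-eigenspaces to analyse are $A_1 = \la E^\rho\ra$; $A_0$, spanned by $d^\rho$ for $d\in\C_D(E)$ and by the vectors \eqref{eq id noncomm 0 eigvect} for $z\in\Om_\perp$; and $A_{\eta(m)}$, spanned by the vectors \eqref{eq id nontriv eigvect} for $a,b\in\Om_E$, $z\in\Om_\perp$.

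On $A_1$, which is a quotient of $A^\al_3(\rt A_{m-1})$ by Lemma \ref{lem f3 axial rep}, I reapply Lemma \ref{lem id eigenvalues} with $F\subseteq E$ in place of $E\subseteq D$, decomposing $A_1$ into $\id_{F^\rho}$-eigenspaces of eigenvalues $1, 0, \eta(\ell)$; since $\id_{E^\rho}$ acts as $1$, the contributed $x$-eigenvalues are $0$, $1$, and $1-\eta(\ell)$. On $A_0$, I show that $\id_{F^\rho}$ annihilates the whole subspace: each $d^\rho$ with $d\in\C_D(E)$ commutes with $F\subseteq E$; and for the vector \eqref{eq id noncomm 0 eigvect}, writing it as $\sum_{c\in\Om_F}(cz)^\rho + \sum_{c\in\Om_E\setminus\Om_F}(cz)^\rho - \al\id_{E^\rho}$, the first sum multiplied by $\id_{F^\rho}$ is $\al\id_{F^\rho}$ by \eqref{eq id mult cz}, the second sum is annihilated (those transpositions commute with $F$), and these cancel against $\al\id_{F^\rho}\id_{E^\rho} = \al\id_{F^\rho}$. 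Hence $x$ has only eigenvalue $0$ on $A_0$.

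The bulk of the work is on $A_{\eta(m)}$, which for each $z\in\Om_\perp$ contributes an $(m-1)$-dimensional slice. The $\Sym(\Om_F)\times\Sym(\Om_{E\setminus F})$-symmetry commutes with $\id_{F^\rho}$, so on each isotypic component of a slice under this group $\id_{F^\rho}$ acts by a scalar; each slice decomposes into isotypic components of dimensions $\ell-1$, $m-\ell-1$, and $1$. On the $(m-\ell-1)$-component, spanned by the vectors \eqref{eq id nontriv eigvect} with $a,b\in\Om_E\setminus\Om_F$, I check directly that $\id_{F^\rho}$ annihilates it: the $(az)^\rho, (bz)^\rho$ commute with $F$, the $c\in\Om_F$ portion of the $(ac)^\rho - (bc)^\rho$ sum cancels by applying \eqref{eq id mult cz} to each of $(ac)^\rho$ and $(bc)^\rho$ separately, and the $c\in\Om_E\setminus\Om_F$ portion is already annihilated. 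This contributes $x$-eigenvalue $\eta(m)$. On the complementary $\ell$-dimensional part per slice, I verify $\id_{F^\rho}$ acts as $\eta(\ell)$ on one representative per isotypic component (taking $a,b\in\Om_F$ for the $(\ell-1)$-dimensional summand), using Lemma \ref{lem id eigenvalues} for $F$ applied to the subalgebra spanned by $F^\rho$ together with $\{(cz)^\rho : c\in\Om_F\}$, which recognises the $(az)^\rho - (bz)^\rho$ part as an $\eta(\ell)$-eigenvector modulo $\la F^\rho\ra$; this yields $x$-eigenvalue $\eta(m)-\eta(\ell)$.

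A routine dimension count (summing $\binom{\ell}{2}$, $\binom{m-\ell}{2}+(m-\ell)$, $(\ell-1)(m-\ell)$, $\binom{n-m}{2}+(n-m)$, $(m-\ell-1)(n-m)$ and $\ell(n-m)$ to $\binom{n}{2}$) confirms that the five eigenvalues exhaust $A$. The edge cases then follow: $n = m$ removes $\Om_\perp$ and so the $\eta(m), \eta(m)-\eta(\ell)$ eigenvalues; $\ell = m$ collapses the $1-\eta(\ell), \eta(m)-\eta(\ell)$ contributions with others; and for $\ell<3$ the subalgebra $\la F^\rho\ra$ degenerates further. The main obstacle is the verification of the $\eta(\ell)$-scalar on the $\ell$-dimensional complement of a slice of $A_{\eta(m)}$, which requires threading the eigenvector formula \eqref{eq id nontriv eigvect} for $E$ through the analogous one for $F$ on mixed-support terms $(az)^\rho$ with $a\in\Om_F, z\in\Om_\perp$; exploiting the $\Sym(\Om_F)\times\Sym(\Om_{E\setminus F})$-symmetry is the cleanest way to reduce this to checking a few representative vectors.
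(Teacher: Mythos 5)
Your reduction to the $\id_m$-eigenspaces, and your handling of $A^{\id_m}_1$ (reapply Lemma \ref{lem id eigenvalues} inside $\la E^\rho\ra$) and of $A^{\id_m}_0$ (show $\id_\ell$ annihilates it), are exactly the paper's steps. The genuine gap is in your analysis of $A^{\id_m}_{\eta(m)}$: you assign the trivial isotypic component of each slice (under $\Sym(\Om_F)\times\Sym(\Om_{E\setminus F})$) to the $\id_\ell$-eigenvalue $\eta(\ell)$, and hence claim the per-slice split $\bigl(\ell,\,m-\ell-1\bigr)$ for the eigenvalues $\eta(\ell)$ and $0$. The correct split is $\bigl(\ell-1,\,m-\ell\bigr)$: the trivial component is annihilated by $\id_\ell$. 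A dimension count already forbids your version: by Lemma \ref{lem id eigenvalues}, $\dim A^{\id_\ell}_{\eta(\ell)}=(\ell-1)(n-\ell)$, of which $(\ell-1)(m-\ell)$ lies inside $\la E^\rho\ra=A^{\id_m}_1$, and since $A^{\id_m}_0\subseteq A^{\id_\ell}_0$, at most $(\ell-1)(n-\ell)-(\ell-1)(m-\ell)=(\ell-1)(n-m)$ dimensions of $\eta(\ell)$-eigenvectors can meet $A^{\id_m}_{\eta(m)}$ --- strictly fewer than your $\ell(n-m)$ whenever there are slices at all. Your closing dimension check cannot detect this, because both splits sum to $m-1$ per slice. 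The error is not cosmetic: for $m=\ell+1$ your annihilated $(m-\ell-1)$-dimensional component is empty, so your argument would conclude that $\eta(m)$ is not an eigenvalue, contradicting the statement and the later use of exactly these axes $x_{\rt A_m/\rt A_{m-1}}$ (whose eigenvalue $\eta(m+1)$ is matched to $\frac12 h^{c(m+3,m+2)}_{3,1}$ in Observation \ref{obs hw identification}). Concretely, in $A^\al_3(\rt A_4)$ with $m=4$, $\ell=3$ the unique slice is three-dimensional, its $\Sym(\Om_F)$-invariant vector is killed by $\id_3$, and so is an honest $\eta(4)$-eigenvector of $x$, not an $\bigl(\eta(4)-\eta(3)\bigr)$-eigenvector.

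A secondary problem is the Schur step itself: to say that $\id_\ell$ acts by a scalar on each isotypic component of a slice you first need $\ad(\id_\ell)$ to preserve the slice (a priori its image picks up components in $\la E^\rho\ra$), and your proposal does not establish this. The paper sidesteps the whole computation: having shown $A^{\id_m}_1$ and $A^{\id_m}_0$ are $\ad(\id_\ell)$-stable, it argues that $A^{\id_m}_{\eta(m)}$ decomposes into $\id_\ell$-eigenvectors with eigenvalues among $\{1,0,\eta(\ell)\}$ and excludes $1$ via $A^{\id_\ell}_1\subseteq A^{\id_m}_1$; no slices and no multiplicities are needed to read off the eigenvalue list. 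If you wish to keep the explicit route, your verifications that the vectors with $a,b\in\Om_E\setminus\Om_F$ are annihilated and that the standard-of-$\Sym(\Om_F)$ part carries $\eta(\ell)$ are sound, but you must redo the trivial component, where the eigenvalue is $0$ and the contributed $x$-eigenvalue is $\eta(m)$, and adjust the multiplicities to $(\ell-1)(n-m)$ and $(m-\ell)(n-m)$ accordingly.
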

		\begin{proof}
			We first fix some notations:
			as usual, $D = (1,2)^{\Sym(n)}$;
			also denote by $\id_i$ the identity of the subalgebra $\la(D\cap\Sym(i))^\rho\ra$,
			where we understand $\Sym(i)$ to have a fixed embedding in $\Sym(n)$.
			We will show that $\ad(\id_\ell)$ and $\ad(\id_m)$ are simultaneously diagonalisable, satisfying
			$$ A^{\id_\ell}_1 \subseteq A^{\id_m}_1,\quad
				A^{\id_m}_0 \subseteq A^{\id_\ell}_0,\quad
				A^{\id_m}_{\eta(m)} \subseteq A^{\id_\ell}_{0,\eta(\ell)}. $$
			Together with Lemma \ref{lem id eigenvalues},
			this implies the result:
			the possible eigenvectors of $x=\id_m-\id_\ell$ are easily determined,
			and the eigenvalues of $x$ are differences of eigenvalues of $\id_m$ and $\id_\ell$.

			The $1$-eigenspace of $\id_m$
			can be seen in the r\^ole of the algebra $A$ in Lemma \ref{lem id eigenvalues}.
			Thus we have a decomposition of the $1$-eigenspace of $\id_m$ with respect to $\id_\ell$.
			Therefore the eigenvalues of $e = \id_m-\id_\ell$,
			restricted to $A^{\id_m}_1$,
			are precisely $1 - \{1,0,\eta(\ell)\} = \{0,1,1-\eta(\ell)\}$.

			Observe that $\C_{\Sym(\ell)}(D)\supseteq\C_{\Sym(m)}(D)$,
			and hence every $0$-eigenvector of $\id_m$ of type \eqref{eq commuting 0 eigvect}
			is a $0$-eigenvector of $\id_\ell$.
			Furthermore, with $S_i = \op{Supp}_{\Sym(i)}(\{1,\dotsc,n\})$
			and $z\not\in\S_m$,
			\begin{equation}
				\id_\ell\Bigl( \sum_{\mathclap{c\in S_m}}(cz)^\rho-\al\id_m \Bigr)
				= \id_\ell\sum_{\mathclap{c\in S_\ell}}(cz)^\rho + \id_\ell\sum_{\mathclap{c\in S_m-S_\ell}}(cz)^\rho - \al\id_\ell.
			\end{equation}
			Note that, if $c\in S_m-S_z$ and $z\not\in S_m\supseteq S_\ell$
			then $(cz)\in\C_D(\Sym(\ell))$ and $\id_\ell (cz)^\rho = 0$,
			so the middle term may be cancelled.
			The two remaining terms are recognised as equal
			by \eqref{eq id mult cz}, and hence cancel.
			So $0$-eigenvectors of type $\eqref{eq id noncomm 0 eigvect}$ for $\id_m$
			are also $0$-eigenvectors for $\id_\ell$.

			We may finally treat $\eta(m)$-eigenvectors of $\id_m$.
			Rather than go through an explicit computation,
			observe that $A = A^{\id_m}_1 + A^{\id_m}_0 + A^{\id_m}_{\eta(m)}$,
			and since we have shown that $A^{\id_m}_1$ and $A^{\id_m}_0$
			are $\ad(\id_\ell)$-modules and $A$ is a vector space, hence splits,
			we have that $A^{\id_m}_{\eta(m)}$ is an $\ad(\id_\ell)$-module.
			It follows by Lemma \ref{lem id eigenvalues}
			that it decomposes into $1$-, $0$- and $\eta(\ell)$-eigenspaces.
			Since the $1$-eigenspace of $\id_\ell$ is a subspace of $A^{\id_m}_1$,
			we have that $A^{\id_m}_{\eta(m)}$ decomposes solely into $0$- and $\eta(\ell)$-eigenspaces.
			This gives the eigenvalues $\eta(m)-0=\eta(m)$
			and $\eta(m)-\eta(\ell)$.
		\end{proof}

		\begin{corollary}
			\label{cor ca primitive}
			The coset axis $x = x_{\Sym(m)/\Sym(\ell)}$ is primitive
			only if $m = \ell+1$.
		\end{corollary}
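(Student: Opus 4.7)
The plan is to apply Lemma \ref{lem ca prim} to reduce primitivity of $x = \id_m - \id_\ell$ to a single dimension count, and then to evaluate that count by applying Lemma \ref{lem id eigenvalues} recursively. Since Lemma \ref{lem ca prim} forces $\dim(A^{\id_m}_1 \cap A^{\id_\ell}_0) \leq 1$ whenever $x$ is primitive, it suffices to exhibit two linearly independent vectors in this intersection whenever $m - \ell \geq 2$.

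The crucial observation is that $A^{\id_m}_1 = \la E^\rho\ra$ is itself a Matsuo algebra, naturally identified with $A^\al_3(\Sym(m), E)$: Lemma \ref{lem id eigenvalues} exhibits the basis $E^\rho$, and the fusion rule $1\star 1 \subseteq \{1\}$ together with the product formula in Definition \ref{def fischer rep} confirms closure under multiplication. Since $\id_\ell$ is a scalar multiple of $\sum_{f \in F} f^\rho$ with $F \subseteq E$, it lies in $A^{\id_m}_1$ and plays there the role of the identity of the subalgebra $\la F^\rho\ra$ arising from the inner embedding $W(\rt A_{\ell-1}) \subseteq W(\rt A_{m-1})$. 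I may therefore apply Lemma \ref{lem id eigenvalues} a second time, with its parameters $n$ and $m$ specialised to $m$ and $\ell$.

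The 0-eigenvectors of $\id_\ell$ inside $A^{\id_m}_1$ then come in two families. From \eqref{eq commuting 0 eigvect} I read off the transpositions $d^\rho$ with $d \in \C_E(F)$, namely those supported on the fixed-point set $\{\ell+1, \ldots, m\}$ of $\Sym(\ell)$, contributing $\binom{m-\ell}{2}$ basis vectors. From \eqref{eq id noncomm 0 eigvect} I read off a further vector $\sum_{c \in \{1,\ldots,\ell\}} (cz)^\rho - \al\id_\ell$ for each of the $m-\ell$ fixed points $z \in \{\ell+1, \ldots, m\}$. Linear independence of the combined family is visible by inspection of supports, and the total dimension is
\[
\binom{m-\ell}{2} + (m-\ell) = \binom{m-\ell+1}{2},
\]
which equals $1$ precisely when $m - \ell = 1$ and is already at least $3$ as soon as $m - \ell \geq 2$. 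The only genuinely new step is recognising that $A^{\id_m}_1$ inherits a Matsuo algebra structure in its own right; once that is in hand, the counting is the dominant piece of the argument but presents no real obstacle.
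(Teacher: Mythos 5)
Your proposal is correct and follows essentially the same route as the paper: reduce primitivity to the dimension of $A^{\id_m}_1\cap A^{\id_\ell}_0$ via Lemma \ref{lem ca prim}, regard $A^{\id_m}_1$ as the Matsuo algebra of $\Sym(m)$ (the observation already used in the proof of Lemma \ref{lem coset eigenvalues}), and re-apply Lemma \ref{lem id eigenvalues} to count the $0$-eigenvectors of $\id_\ell$ there. Your count $\binom{m-\ell}{2}+(m-\ell)$ is exactly the paper's $\size{\C_{D\cap\Sym(m)}(D\cap\Sym(\ell))}+\size{S_m-S_\ell}$, so nothing is missing.
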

		\begin{proof}
			We continue to use the notation of the previous Lemma.
			The $1$-eigenspace $A^x_1$ of $x$ in $A$
			contains $A^{\id_m}_1\cap A^{\id_\ell}_0$.
			By Lemma \ref{lem id eigenvalues},
			this has dimension $\size{S_m-S_\ell} + \size{\C_{D\cap\Sym(m)}(D\cap\Sym(\ell))}$;
			evidently this is $1$ if and only if $m=\ell+1$.
		\end{proof}

		\clearpage
		\begin{theorem}
			\label{thm fus rules}
			The coset axis $x_{\Sym(m)/\Sym(\ell)}$ satisfies the fusion rules $\Phi$:
			\begin{center}
				\begin{tabular}{|c||c|c|c|c|c|}
					\hline
						$\star$ & $1$ & $0$ & $\eta(m)$ & $1-\eta(\ell)$ & $\eta(m)-\eta(\ell)$\\
					\hline\hline
						$1$ & $\{1\}$ & $\emptyset$ & $\{\eta(m)\}$ & $\{1-\eta(\ell)\}$ & $\{\eta(m)-\eta(\ell)\}$ \\
					\hline
						$0$ &  & $\{0\}$ & $\{\eta(m)\}$ & $\{1-\eta(\ell)\}$ & $\{\eta(m)-\eta(\ell)\}$ \\
					\hline
						$\eta(m)$ &  &  & $\{1,0,\eta(m)\}$ & $\{\eta(m)-\eta(\ell)\}$ & $\{1-\eta(\ell),\eta(m)-\eta(\ell)\}$ \\
					\hline
						$1-\eta(\ell)$ &  &  &  & $\{1,0,1-\eta(\ell)\}$ & $\{\eta(m),\eta(m)-\eta(\ell)\}$ \\
					\hline
						$\eta(m)-\eta(\ell)$ & 	&  &  &  & $\{1,0,\eta(m),1-\eta(\ell),\eta(m)-\eta(\ell)\}$ \\
					\hline
				\end{tabular}
			\end{center}
		\end{theorem}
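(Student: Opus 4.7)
My approach is to refine the simultaneous diagonalisation of $\id_m$ and $\id_\ell$ from Lemma \ref{lem coset eigenvalues} into a statement about products, then read the fusion rules of $x = \id_m - \id_\ell$ off the separate fusion rules of $\id_m$ and $\id_\ell$ on the resulting joint eigenspaces.

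The first and main step is to establish the fusion rules for $\id_m$ (and identically for $\id_\ell$) on its three eigenvalues $\{1, 0, \eta(m)\}$, namely
\[
1 \star 1 = \{1\},\quad 1 \star 0 = \emptyset,\quad 0 \star 0 = \{0\},\quad 1 \star \eta(m) = 0 \star \eta(m) = \{\eta(m)\},\quad \eta(m) \star \eta(m) = \{1, 0, \eta(m)\}.
\]
The rule $1 \star 1 = \{1\}$ is immediate since $A^{\id_m}_1$ coincides with $\la E^\rho \ra$, which is closed under multiplication. For $1 \star 0 = \emptyset$ the only checks are the trivial $e^\rho d^\rho = 0$ for $d\in\C_D(E)$ and the cancellation $e^\rho \phi(z) = 0$, which follows from \eqref{eq erho al eigvect}. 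The remaining rules follow from direct computation on the basis of Lemma \ref{lem id eigenvalues}, using the multiplication in Definition \ref{def fischer rep} together with the rewriting $\sum_c (cz)^\rho = \phi(z) + \al \id_{E^\rho}$ and the absorption relation \eqref{eq id mult cz}, which reduce every product back to the chosen eigenbasis.

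Granted these fusion rules, the three containments $A^{\id_\ell}_1 \subseteq A^{\id_m}_1$, $A^{\id_m}_0 \subseteq A^{\id_\ell}_0$ and $A^{\id_m}_{\eta(m)} \subseteq A^{\id_\ell}_{0,\eta(\ell)}$ obtained in the proof of Lemma \ref{lem coset eigenvalues} decompose $A$ into six nonzero joint eigenspaces $W_{\lm_m, \lm_\ell}$ indexed by $(1,1), (1,0), (1,\eta(\ell)), (0,0), (\eta(m),0)$ and $(\eta(m),\eta(\ell))$, contributing to the $x$-eigenspaces of eigenvalue $0, 1, 1-\eta(\ell), 0, \eta(m)$ and $\eta(m) - \eta(\ell)$ respectively. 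Thus $A^x_0 = W_{1,1} \oplus W_{0,0}$ and each other $A^x_\kappa$ is a single joint eigenspace.

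Finally, for any $u \in W_{\lm_m, \lm_\ell}$ and $v \in W_{\mu_m, \mu_\ell}$, the product $uv$ lies in $\sum W_{\lm_m', \lm_\ell'}$ with $\lm_m' \in \lm_m \star_m \mu_m$ and $\lm_\ell' \in \lm_\ell \star_\ell \mu_\ell$. Translating each such pair back to an $x$-eigenvalue and tabulating entry by entry produces the claimed table; the diagonal $(\eta(m) - \eta(\ell)) \star (\eta(m) - \eta(\ell))$ exhausts every joint eigenspace, because both $\eta(m) \star_m \eta(m) = \{1, 0, \eta(m)\}$ and $\eta(\ell) \star_\ell \eta(\ell) = \{1, 0, \eta(\ell)\}$ are as large as possible, accounting for all five $x$-eigenvalues. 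The main obstacle is therefore the first step, and within it the self-fusion $\eta(m) \star_m \eta(m) = \{1, 0, \eta(m)\}$: the product of two $\psi$-type vectors must be expanded via Definition \ref{def fischer rep}, the cross terms grouped according to the incidence pattern among the four support points, and the result reassembled into the $E^\rho$, $\C_D(E)^\rho$, $\phi$, $\psi$ basis of Lemma \ref{lem id eigenvalues} --- a long but mechanical calculation of the same character as the ones already displayed in the proof of that lemma.
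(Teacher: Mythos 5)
Your proposal is correct and follows essentially the same route as the paper: first establish the single-identity fusion rules $\bullet$ for $\id_m$ (and likewise $\id_\ell$) on $\{1,0,\eta(m)\}$, then combine them through the simultaneous diagonalisation and eigenspace containments of Lemma \ref{lem coset eigenvalues} to read off the table for $x=\id_m-\id_\ell$. The only point worth noting is that the step you single out as the main obstacle, $\eta(m)\bullet\eta(m)=\{1,0,\eta(m)\}$, in fact requires no computation at all because $\{1,0,\eta(m)\}$ is already the full eigenvalue set of $\id_m$; the genuine work lies in the exclusion rules $1\bullet 0=\emptyset$, $0\bullet 0=\{0\}$ and $1\bullet\eta(m)=0\bullet\eta(m)=\{\eta(m)\}$, which the paper (like you) verifies by direct case-by-case computation on the eigenbasis of Lemma \ref{lem id eigenvalues}.
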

		\begin{proof}
			We first calculate the fusion rules $(\Phi',\bullet)$
			for eigenvectors of $\id_m = \id_{E^\rho}$,
			for $(\Sym(m),E)\subseteq(\Sym(n),D)$, $D=(1,2)^{\Sym(n)}$,
			in $A^\al_3(\Sym(n),D)$.
			These turn out to be
			\begin{center}
				\begin{tabular}{|c||c|c|c|}
					\hline
						$\bullet$	& $1$ & $0$ & $\eta(m)$ \\
					\hline\hline
						$1$ & $\{1\}$ & $\emptyset$ & $\{\eta(m)\}$ \\
					\hline
						$0$ &  & $\{0\}$ & $\{\eta(m)\}$ \\
					\hline
						$\eta(m)$ &  &  & $\{1,0,\eta(m)\}$ \\
					\hline
				\end{tabular}
			\end{center}
			From this, the fusion rules $\Phi$ will follow:
			every eigenvector of $x$
			is an eigenvector for $\id_m$ and an eigenvector for $\id_\ell$,
			by Lemma \ref{lem coset eigenvalues}.
			Hence, for two eigenvectors $u$ and $v$ of $x$
			with associated eigenvalues $\kp,\lm$ and $\mu,\nu$
			for $\ad(\id_m),\ad(\id_\ell)$ respectively,
			we have that
			\begin{equation} uv\in
				(A^{\id_m}_\kp A^{\id_m}_\mu)\cap (A^{\id_\ell}_\lm A^{\id_\ell}_\nu)
				= A^{\id_m}_{\kp\bullet\mu}\cap A^{\id_\ell}_{\lm\bullet\nu}
				= A^x_{\kp\bullet\mu-\lm\bullet\nu},
			\end{equation}
			with further restrictions arising from the containment of eigenspaces
			described in Lemma \ref{lem coset eigenvalues}.
			Therefore $(\kp-\lm)\star(\mu-\nu)=\kp\bullet\mu-\lm\bullet\nu$,
			where $S-T$ is the pointwise difference $\{s-t\mid s\in S,t\in T\}$ of sets $S,T$.
			We exhibit two exemplary deductions:
			\begin{gather}
				\begin{aligned}
					0\star\eta(m)
					& = (1\bullet\eta(m)-1\bullet0)\cup(0\bullet\eta(m)-0\bullet0) \\
					& = (\{\eta(m)\}-\emptyset)\cup(\{\eta(m)\}-\{0\})
					= \emptyset\cup\{\eta(m)\}
					= \{\eta(m)\},
				\end{aligned} \\
				\begin{aligned}
					\eta(m)\star(\eta(m)-\eta(\ell))
					& = \eta(m)\bullet\eta(m)-0\bullet\eta(l)
					= \{1,0,\eta(m)\}-\{\eta(\ell)\} \\
					& = \{1-\eta(\ell),\eta(m)-\eta(\ell)\}.
				\end{aligned}
			\end{gather}

			Now to prove the rules we stated for $\bullet$.
			Note that there is nothing to prove to show that $\eta(m)\bullet\eta(m)=\{1,0,\eta(m)\}$.
			It is clear that $E^\rho E^\rho\subseteq E^\rho$,
			so $1\bullet1=\{1\}$.
			Set $S = \op{Supp}(\Sym(m))$.
			Also $1\bullet0=\emptyset$,
			for if $e\in E$ and $d\in\C_D(E)$ we have $e^\rho d^\rho=0$,
			and by \eqref{eq erho al eigvect},
			\begin{equation}
				e\Bigl(\sum_{\mathclap{c\in S}}(cz)^\rho-\al\id_{E^\rho}\Bigr)
				= \al e-\al e = 0.
			\end{equation}

			We will observe that $0\bullet0=\{0\}$ case by case.
			As before, $\C_D(E)^\rho\C_D(E)^\rho\subseteq\C_D(E)$,
			which demonstrates that the product of two $0$-eigenvectors
			of types \eqref{eq commuting 0 eigvect} is again a $0$-eigenvector.
			If $(yy')\in\C_D(E)$ is arbitrary then
			\begin{align}
				(yy')^\rho&\sum_{\mathclap{c\in S}}(cz)^\rho - (yy')^\rho\al\id_{E^\rho}
				\intertext{is $0$ when $y'\neq z$, because all elements commute;
					if however $y' = z$, then}
				&= \frac{\al}{2}\sum_{\mathclap{c\in S}}((cz)^\rho+(yz)^\rho-(cy)^\rho)
				= \frac{\al}{2}\size{ S}(yz) + \frac{\al}{2}\sum_{\mathclap{c\in S}}((cz)^\rho-(cy)^\rho),
			\end{align}
			and of course $(yz)$ is a $0$-eigenvector for $\id_{E^\rho}$,
			and using \eqref{eq id mult cz},
			\begin{equation}
				\id_{E^\rho}\sum_{\mathclap{c\in S}}((cz)^\rho-(cy)^\rho)
				= \id_{E^\rho}\sum_{\mathclap{c\in S}}(cz)^\rho
					- \id_{E^\rho}\sum_{\mathclap{c\in S}}(cy)^\rho
				= \al\id_{E^\rho} - \al\id_{E^\rho} = 0.
			\end{equation}
			Therefore the product of $0$-eigenvectors of types \eqref{eq commuting 0 eigvect} and \eqref{eq id noncomm 0 eigvect} is also a $0$-eigenvector.
			Finally, for $y,z\not\in S$,
			we compute the product of two $0$-eigenvectors of type \eqref{eq id noncomm 0 eigvect}:
			\begin{align}
				\label{eq prod type II 0eigvect}
				\Bigl(\sum_{a\in S} & (ay)^\rho-\al\id_{E^\rho}\Bigr)
				\Bigl(\sum_{b\in S}(az)^\rho-\al\id_{E^\rho}\Bigr)
				= \Bigl(\sum_{\substack{a\in S\\b\in S}}(ay)^\rho(bz)^\rho\Bigr)
				- \al^2\id_{E^\rho}
				\intertext{Supposing that $y\neq z$, that equals}
				& = \Bigl(\sum_{a\in S}(ay)^\rho(az)^\rho\Bigr) - \al^2\id_{E^\rho}
				= \Bigl(\frac{\al}{2}\sum_{a\in S}(ay)^\rho+(az)^\rho-(yz)^\rho\Bigr) - \al^2\id_{E^\rho}
				\intertext{and, when we multiply by $\id_{E^\rho}$,
				this kills the terms $(yz)^\rho$ and leaves us with
				$\frac{\al}{2}(\al\id_{E^\rho}+\al\id_{E^\rho}) - \al^2\id_{E^\rho}=0$.
				On the other hand, suppose $y=z$ and continue from \eqref{eq prod type II 0eigvect}: }
				& = \Bigl(\sum_{a\in S}(az)^\rho + \frac{\al}{2}\sum_{b\in S}((az)^\rho+(bz)^\rho-(ab)^\rho)\Bigr) - \al^2\id \\
				& = \sum_{a\in S}(az)^\rho - \al\sum E^\rho + \al(m-1)\sum_{a\in S}(az)^\rho - \al\id_{E^\rho}.
			\end{align}
			Multiplying by $\id_{E^\rho}$, we get
			$\al\id_{E^\rho} - \al\sum E^\rho +\al^2(m-1)\id_{E^\rho} - \al^2\id_{E^\rho}
			= \al\left(\sum E^\rho-\sum E^\rho\right) = 0$.

			The remaining cases $1\bullet\eta(m),0\bullet\eta(m)$ we omit.
			They are along the same lines as the previous calculations.
		\end{proof}

	\subsection{Central Charges} \label{ss cc}
		The {\em central charge} $\op{cc}(x)$ of an idempotent $x$
		in an algebra with bilinear form $\la,\ra$
		is defined to be $\frac{1}{2}\la x,x\ra$.
		(The name, and the factor $\frac{1}{2}$,
		comes from extension of the Virasoro-VOA case,
		as we will explore below.)
		Also, a unital algebra $A$
		has central charge $\op{cc}(A) = \op{cc}(\id_A) = \frac{1}{2}\la\id_A,\id_A\ra$.

		In \eqref{eq def form}, we remarked that
		an algebra $A = A^\al_3(G,D)$ admits a bilinear form.
		We use this to study coset axes.

		\begin{lemma}
			\label{lem cc id}
			Suppose that $A = A^\al_3(G,D)$ for a $k$-regular $(G,D)$.
			Then $\op{cc}(A)=\displaystyle\smash{\frac{\size D}{2+\al k}}$.
		\end{lemma}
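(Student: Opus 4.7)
The plan is to compute $\langle \id_A,\id_A\rangle$ directly, using the explicit formula for $\id_A$ from Corollary~\ref{cor f3 id} and the definition of the bilinear form from \eqref{eq def form}, then divide by $2$.

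First I would substitute $\id_A = \frac{2}{2+\al k}\sum D^\rho$ to rewrite
\[
	\langle \id_A,\id_A\rangle = \frac{4}{(2+\al k)^2}\sum_{c,d\in D}\langle c^\rho, d^\rho\rangle.
\]
The work is then to evaluate the double sum. Fix $d\in D$ and partition the inner sum over $c$ using the decomposition $D = \{d\}\cup\nhood_D(d)\cup\commute_D(d)$ of \eqref{eq nhood 3trgp}. By the definition of the form, the term $c=d$ contributes $1$, each $c\in\nhood_D(d)$ contributes $\al/2$, and each $c\in\commute_D(d)$ contributes $0$. Since $(G,D)$ is $k$-regular, $|\nhood_D(d)|=k$ independently of $d$, so the inner sum equals $1+\al k/2 = (2+\al k)/2$ for every $d\in D$.

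Summing over the $|D|$ choices of $d$ then gives $\sum_{c,d}\langle c^\rho,d^\rho\rangle = |D|(2+\al k)/2$, and the factors of $(2+\al k)$ collapse to yield $\langle \id_A,\id_A\rangle = 2|D|/(2+\al k)$. Halving gives the claimed value of $\op{cc}(A)$.

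There is no real obstacle here — the argument is just bookkeeping using regularity so that the inner sum does not depend on $d$. The only thing to be careful about is that the two contributions coming from an unordered pair of noncommuting transpositions are counted once each in the ordered double sum, which is automatic since both $\langle c^\rho,d^\rho\rangle$ and $\langle d^\rho,c^\rho\rangle$ equal $\al/2$.
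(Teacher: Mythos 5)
Your proposal is correct and is essentially the paper's own computation: both expand $\langle\id_A,\id_A\rangle$ via the formula for $\id_A$ from Corollary~\ref{cor f3 id}, split the sum over $c$ using the partition \eqref{eq nhood 3trgp}, and use $k$-regularity so each inner sum equals $1+\tfrac{\al k}{2}$. Nothing further is needed.
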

		\begin{proof}
			By straightforward calculation:
			\begin{align*}
				\op{cc}(A) & = \frac{1}{2}\la\id_A,\id_A\ra
				= \frac{1}{2}\frac{1}{(1+\frac{1}{2}\al k)^2}\Bigl\la\sum D^\rho,\sum D^\rho\Bigr\ra \\
				& = \frac{1}{2(1+\frac{1}{2}\al k)^2}\sum_{d\in D}\Bigl(\la d^\rho,d^\rho\ra + \sum_{e\in\nhood_D(d)}\la d^\rho,e^\rho\ra + \sum_{e\in\commute_D(d)}\la d^\rho,e^\rho\ra\Bigr) \\
				& = \frac{1}{2(1+\frac{1}{2}\al k)^2}\sum_{d\in D}(1+k\frac{\al}{2}+0)
				= \frac{\size D}{2+\al k}.
				\qedhere
			\end{align*}
		\end{proof}

		\begin{corollary}
			\label{cor cc weyl}
			If ${\rt X_n}$ is a simply-laced root system, then
			$\displaystyle\smash{\op{cc}(A^\al_3({\rt X_n}))=\frac{n h_{\rt X_n}}{4(1+\al(h_{\rt X_n}-2))}}$.
			\qed
		\end{corollary}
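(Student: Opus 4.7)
My plan is to obtain this as a direct specialization of Lemma \ref{lem cc id} to the case $(G,D) = W(\rt X_n)$, so the only inputs I need are the value of the regularity parameter $k$ and the cardinality $\size D$ for a simply-laced Weyl group.

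The regularity parameter is already given in Section \ref{ss cox weyl}: for a simply-laced $\rt X_n$, the transposition group $W(\rt X_n)$ is $(2h_{\rt X_n}-4)$-regular, so $k = 2h_{\rt X_n}-4$. For $\size D$, I will use the standard fact that the transpositions in $W(\rt X_n)$ are in bijection with the positive roots of $\rt X_n$, and that the total number of roots of an irreducible root system equals $n\cdot h_{\rt X_n}$ (rank times Coxeter number). Hence $\size D = \tfrac{1}{2}n h_{\rt X_n}$.

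Substituting $k = 2h_{\rt X_n}-4$ and $\size D = \tfrac{1}{2}n h_{\rt X_n}$ into Lemma \ref{lem cc id} gives
\begin{equation*}
\op{cc}(A^\al_3(\rt X_n)) = \frac{\size D}{2+\al k} = \frac{\tfrac{1}{2}n h_{\rt X_n}}{2+\al(2h_{\rt X_n}-4)} = \frac{n h_{\rt X_n}}{4(1+\al(h_{\rt X_n}-2))},
\end{equation*}
which is precisely the asserted formula. There is no real obstacle here; the whole content is recalling the two numerical facts about simply-laced Weyl groups and feeding them into the previous lemma. If desired one could also verify the formula case by case against the explicit identities listed in Corollary \ref{cor ade id}, where the denominators $1+\al(n-1)$ for $\rt A_n$, $1+\al(2n-4)$ for $\rt D_n$, and $1+10\al, 1+16\al, 1+28\al$ for $\rt E_6, \rt E_7, \rt E_8$ match $1+\al(h_{\rt X_n}-2)$ with $h_{\rt X_n}$ equal to $n+1, 2n-2, 12, 18, 30$ respectively.
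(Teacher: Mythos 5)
Your proposal is correct and is exactly the argument the paper intends (the corollary is stated with no written proof, as an immediate specialization of Lemma \ref{lem cc id}): substituting $k = 2h_{\rt X_n}-4$ and $\size D = \tfrac{1}{2}n h_{\rt X_n}$, the latter from the standard count of positive roots, gives the stated formula. The cross-check against Corollary \ref{cor ade id} is a nice confirmation but not needed.
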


		\begin{lemma}
			\label{lem assoc ids}
			Suppose that $(K,F)\leq(H,E)\leq(G,D)$ is a chain of transposition groups,
			with $(K,F)$ and $(H,E)$ $k_F$ and $k_E$-regular respectively.
			Then, in $A^\al_3(G,D)$,
			$$ \la \id_{E^\rho},\id_{F^\rho}\ra = \la \id_{F^\rho},\id_{F^\rho}\ra. $$
		\end{lemma}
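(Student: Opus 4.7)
The plan is to compute both inner products directly using the explicit formula $\id_{E^\rho} = (1 + \frac{\al k_E}{2})^{-1}\sum E^\rho$ (and its analogue for $F$) from Corollary \ref{cor f3 id}, together with the definition of the form in \eqref{eq def form} and the partition \eqref{eq nhood 3trgp}. The key observation is that since $F\subseteq E$, every $f\in F$ lies inside $E$, so the partition $E = \{f\}\cup\nhood_E(f)\cup\commute_E(f)$ applies, and regularity of $(H,E)$ tells us $\size{\nhood_E(f)}=k_E$ \emph{uniformly in $f\in E$} (in particular for $f\in F$).

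First I would fix $f\in F$ and compute
\[
\la \textstyle\sum E^\rho, f^\rho\ra
= \la f^\rho,f^\rho\ra
+ \sum_{e\in\nhood_E(f)}\la e^\rho,f^\rho\ra
+ \sum_{e\in\commute_E(f)}\la e^\rho,f^\rho\ra
= 1 + \tfrac{\al}{2}k_E + 0,
\]
using the values of the form and regularity of $(H,E)$. Summing over $f\in F$ yields $\la \sum E^\rho,\sum F^\rho\ra = \size{F}(1+\tfrac{\al}{2}k_E)$, and dividing by the two normalisation factors gives
\[
\la\id_{E^\rho},\id_{F^\rho}\ra
= \frac{\size{F}(1+\tfrac{\al}{2}k_E)}{(1+\tfrac{\al}{2}k_E)(1+\tfrac{\al}{2}k_F)}
= \frac{\size{F}}{1+\tfrac{\al}{2}k_F}.
\]
Then I would repeat the same computation for $\la\id_{F^\rho},\id_{F^\rho}\ra$; this time every inner summation runs over $F$, so regularity of $(K,F)$ gives $\size{\nhood_F(f)}=k_F$ for each $f\in F$, and I obtain $\la\id_{F^\rho},\id_{F^\rho}\ra = \size{F}/(1+\tfrac{\al}{2}k_F)$ (this is consistent with the central charge computation of Lemma \ref{lem cc id} applied to $(K,F)$). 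Comparing the two expressions completes the proof.

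The main subtlety — really the only content of the argument — is to notice that the relevant regularity in the cross term $\la\id_{E^\rho},\id_{F^\rho}\ra$ is that of the \emph{larger} group $(H,E)$, not of $(K,F)$, precisely because the outer sum collects $\la e^\rho,f^\rho\ra$ as $e$ ranges over $E$ while $f$ is fixed in $F$. Once the indices are kept straight the computation is mechanical, and the cancellation of $(1+\tfrac{\al}{2}k_E)$ in the numerator and denominator is what produces the stated equality. (Alternatively, one could argue more abstractly that $\id_{E^\rho}\id_{F^\rho}=\id_{F^\rho}$ since $\id_{F^\rho}\in\la F^\rho\ra\subseteq\la E^\rho\ra$, and then invoke associativity of the form $\la ab,c\ra=\la a,bc\ra$ on Matsuo algebras, but verifying associativity requires a small case analysis that the direct computation avoids.)
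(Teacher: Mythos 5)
Your proposal is correct and follows essentially the same route as the paper: a direct computation of $\la\sum E^\rho,\sum F^\rho\ra$ using the form values and regularity, yielding $\size F(1+\tfrac{\al}{2}k_E)$ and hence $\size F/(1+\tfrac{\al}{2}k_F)$ after normalising, matched against $\la\id_{F^\rho},\id_{F^\rho}\ra$ from the same computation for $(K,F)$. The only cosmetic difference is that you invoke $k_E$-regularity at $f\in F\subseteq E$ directly, while the paper splits the inner sum into contributions from $F$ and from $\nhood_E(F)\smallsetminus F$ (giving $1+\tfrac{\al}{2}k_F+(k_E-k_F)\tfrac{\al}{2}$), which amounts to the same count.
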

		\begin{proof}
			The noncommuting elements outside of $F$
			make up the difference in scaling between $\id_E$ and $\id_F$.
			We calculate
			\begin{align*}
				\la \id_{E^\rho},\id_{F^\rho}\ra
				& = \frac{1}{1+\frac{1}{2}\al k_E}\frac{1}{1+\frac{1}{2}\al k_F}\Bigl\la\sum E^\rho,\sum F^\rho\Bigr\ra \\
				& = \frac{1}{(1+\frac{1}{2}\al k_E)(1+\frac{1}{2}\al k_F)}\sum_{f\in F}\Bigl(\sum_{\mathclap{f'\in F}}\la f,f'\ra+\sum_{\mathclap{e\in\nhood_E(F)}}\la f,e\ra + \sum_{\mathclap{e\in\commute_E(F)}}\la f,e\ra\Bigr) \\
				& = \frac{1}{(1+\frac{1}{2}\al k_E)(1+\frac{1}{2}\al k_F)}\sum_{f\in F}\left(1+\frac{1}{2}\al k_F + (k_E-k_F)\frac{\al}{2}+0\right) \\
				& = \frac{\size F(1+\frac{1}{2}\al k_E)}{(1+\frac{1}{2}\al k_E)(1+\frac{1}{2}\al k_F)}
				= \frac{\size F}{1+\frac{1}{2}\al k_F}.
				\qedhere
			\end{align*}
		\end{proof}

		\begin{corollary}
			\label{cor cc ca}
			The central charge of the coset axis $x_{H/K}$ is
			$\displaystyle\smash{\frac{\size E}{2+\al k_E}-\frac{\size F}{2+\al k_F}}$.
			\qed
		\end{corollary}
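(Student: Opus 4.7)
The plan is to expand $\operatorname{cc}(x_{H/K}) = \tfrac{1}{2}\langle \id_{E^\rho} - \id_{F^\rho}, \id_{E^\rho} - \id_{F^\rho}\rangle$ by bilinearity and then apply the two preceding lemmas to each piece. Bilinearity gives
\begin{equation*}
	\tfrac{1}{2}\langle \id_{E^\rho} - \id_{F^\rho}, \id_{E^\rho} - \id_{F^\rho}\rangle
	= \tfrac{1}{2}\langle \id_{E^\rho}, \id_{E^\rho}\rangle
	- \langle \id_{E^\rho}, \id_{F^\rho}\rangle
	+ \tfrac{1}{2}\langle \id_{F^\rho}, \id_{F^\rho}\rangle,
\end{equation*}
using that the form is symmetric.

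For the two diagonal terms, I would invoke Lemma~\ref{lem cc id} applied to the subalgebras $\langle E^\rho\rangle \cong A^\al_3(H,E)$ and $\langle F^\rho\rangle \cong A^\al_3(K,F)$, which are themselves of the form covered by that lemma since $(H,E)$ and $(K,F)$ are $k_E$- and $k_F$-regular respectively. This produces $\tfrac{1}{2}\langle \id_{E^\rho},\id_{E^\rho}\rangle = \tfrac{\size E}{2+\al k_E}$ and $\tfrac{1}{2}\langle \id_{F^\rho},\id_{F^\rho}\rangle = \tfrac{\size F}{2+\al k_F}$.

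For the cross term, I would apply Lemma~\ref{lem assoc ids}, which tells us $\langle \id_{E^\rho}, \id_{F^\rho}\rangle = \langle \id_{F^\rho}, \id_{F^\rho}\rangle = \tfrac{2\size F}{2+\al k_F}$. Substituting into the expansion, the $+\tfrac{\size F}{2+\al k_F}$ term and half of the cross term combine to cancel, leaving exactly
\begin{equation*}
	\operatorname{cc}(x_{H/K}) = \frac{\size E}{2+\al k_E} - \frac{\size F}{2+\al k_F},
\end{equation*}
as required. There is no real obstacle here: the statement is a bookkeeping consequence of the two lemmas, and the only minor thing to verify is that the scaling factors $(1+\tfrac12\al k_E)$ and $(1+\tfrac12\al k_F)$ in the two applications of Lemma~\ref{lem cc id} and in Lemma~\ref{lem assoc ids} are being used consistently, so that the cross term really does cancel one copy of $\tfrac{\size F}{2+\al k_F}$ cleanly.
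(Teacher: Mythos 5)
Your proposal is correct and is essentially the argument the paper intends (the corollary is stated with only a \qed, as an immediate consequence of Lemma~\ref{lem cc id} and Lemma~\ref{lem assoc ids}): expand $\tfrac12\la \id_{E^\rho}-\id_{F^\rho},\id_{E^\rho}-\id_{F^\rho}\ra$ bilinearly, evaluate the diagonal terms via Lemma~\ref{lem cc id} applied to the $k_E$- and $k_F$-regular subalgebras $\la E^\rho\ra$ and $\la F^\rho\ra$, and cancel the cross term via Lemma~\ref{lem assoc ids}. The only implicit point, which you handle correctly, is that the form on $A^\al_3(G,D)$ restricts to $\la E^\rho\ra$ and $\la F^\rho\ra$ exactly as the form of $A^\al_3(H,E)$ and $A^\al_3(K,F)$, so the two applications of Lemma~\ref{lem cc id} and the cross-term computation use consistent scalings.
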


		\begin{corollary}
			\label{cor cc sym cosets}
			The coset axis $x_m = x_{\rt A_m/\rt A_{m-1}}$ has central charge
			\begin{equation}
				\op{cc}(x_m) = \frac{m(2+\al(m-3))}{4(1+\al(m-1))(1+\al(m-2))};
			\end{equation}
			the coset axis $y_m = y_{\rt D_m/\rt D_{m-1}}$, for $m>4$,
			has central charge
			\begin{equation}
				\op{cc}(y_m) = \frac{(m-1)(1+\al(m-4))}{(1+\al(2m-4))(1+\al(2m-6))}. 
			\end{equation}
		\end{corollary}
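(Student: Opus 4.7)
The proof is a direct substitution into Corollary~\ref{cor cc ca}, combined with the Coxeter-theoretic data summarised in Section~\ref{ss cox weyl}. Recall that for a simply-laced root system $\rt X_n$, the Weyl group $W(\rt X_n)$ is $(2h_{\rt X_n}-4)$-regular, so that $2+\al k_{\rt X_n} = 2(1+\al(h_{\rt X_n}-2))$.

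For $x_m$, take $(H,E) = W(\rt A_m)$ and $(K,F) = W(\rt A_{m-1})$. Then $\size E = \binom{m+1}{2}$, $\size F = \binom{m}{2}$, and from $h_{\rt A_k} = k+1$ we read $k_E = 2m-2$ and $k_F = 2m-4$. Substituting into the formula $\op{cc}(x_{H/K}) = \size E/(2+\al k_E) - \size F/(2+\al k_F)$ and placing over the common denominator $4(1+\al(m-1))(1+\al(m-2))$ gives a numerator
\begin{equation*}
    m\bigl[(m+1)(1+\al(m-2)) - (m-1)(1+\al(m-1))\bigr].
\end{equation*}
On expansion, the $\al$-coefficient is $(m+1)(m-2) - (m-1)^2 = m-3$, while the constant part is $(m+1) - (m-1) = 2$; hence the bracket collapses to $2+\al(m-3)$, yielding the claimed formula.

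For $y_m$ the argument is identical in structure. One needs $\size{D(\rt D_k)} = k(k-1)$ (easily recognised as the number of positive roots of $\rt D_k$, or extracted by comparing Corollary~\ref{cor cc weyl} with Lemma~\ref{lem cc id}), and from $h_{\rt D_k} = 2k-2$ one obtains $k_{\rt D_k} = 4k-8$. The hypothesis $m>4$ is imposed only to keep $\rt D_{m-1}$ within the range of the definition. The analogous expansion naturally factors $(m-1)$ out front of the numerator, and the remaining bracket $m(1+\al(2m-6)) - (m-2)(1+\al(2m-4))$ reduces, via the same cancellation of quadratic-in-$m$ terms, to $2(1+\al(m-4))$.

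The calculation is purely mechanical and presents no conceptual obstacle. The only point requiring care is verifying, in each case, that the $m^2$-coefficients cancel exactly as required to produce the clean linear factors $(m-3)$ and $(m-4)$ in the numerators; this cancellation is what makes the final forms look so much tidier than the initial difference of two fractions.
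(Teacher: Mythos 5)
Your proposal is correct and follows exactly the paper's route: apply Corollary~\ref{cor cc ca} together with the $(2h_{\rt X_n}-4)$-regularity of $W(\rt X_n)$, with $\size{D(\rt A_k)}=\binom{k+1}{2}$ and $\size{D(\rt D_k)}=k(k-1)$; the paper simply omits the algebra you carried out. The expansions and cancellations you record (numerator brackets collapsing to $2+\al(m-3)$ and $2(1+\al(m-4))$) check out.
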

		\begin{proof}
			We can apply Corollary \ref{cor cc ca}
			using that $W(\rt X_n)$ is $2h_{\rt X_n}-4$-regular.
		\end{proof}

		Now, for all $\al\in\RR$, we define
		\begin{align}
			f^\rt A_\al\colon\RR\to\RR,\quad
			f^\rt A_\al(m) & = \frac{m(2+\al(m-3))}{4(1+\al(m-1))(1+\al(m-2))}, \\
			f^\rt D_\al\colon\RR\to\RR,\quad
			f^\rt D_\al(m) & = \frac{(m-1)(1+\al(m-4))}{(1+\al(2m-4))(1+\al(2m-6))}.
		\end{align}
		Note that $f^\rt A_\al(1)=\frac{1}{2} = f^\rt D_\al(1)$,
		that is, for all values of $\al$;
		this corresponds to $\id_{\Sym(1)}$ being an axis of central charge $\frac{1}{2}$ in our setup
		and allowing $\id_{\Sym(0)}=0$.
		Also observe
		\begin{equation}
			\lim_{m\to\infty} f^\rt A_\al(m)
				= \frac{1}{4\al}
				= \lim_{m\to\infty} f^\rt D_\al(m).
		\end{equation}

		It is not clear, at this stage, whether or not
		the form $\la,\ra$ on $A^\al_3(G,D)$ is positive-definite.
		But supposing we desire positive-definiteness,
		certainly we want $f^\rt A_\al(\ZZ),f^\rt D_\al(\ZZ> 4)$
		to be bounded below by $0$.
		Clearly then $\al\geq0$.

		The limit of $f^\rt A_\al(m)$ suggests a possible specialisation of $\al$:
		if we desire $\op{cc}(x_m)$ to be bounded by $1$ for all $m$,
		the smallest possible $\al$ is $\al = \frac{1}{4}$.
		In this case we have
		\begin{align}
			\label{eq f quarter m}
			f^\rt A_{1/4}(m) & = \frac{m(m+5)}{(m+2)(m+3)} = 1 - \frac{6}{(m+2)(m+3)} \\
			\label{eq f quarter Dm}
			f^\rt D_{1/4}(m) & = \frac{m(m-1)}{m(m-1)} = 1.
		\end{align}
		We also calculate
		\begin{equation}
			\label{eq cc 1/32}
			f^\rt A_{1/32}(m) = \frac{8m(m+61)}{(m+30)(m+31)}
			= 8\left(1-\frac{930}{(m+30)(m+31)}\right).
		\end{equation}

	\subsection{Relations to Virasoro representations} \label{ss apps}
		For certain choices of $(G,D)$ and $\al$,
		it turns out that the algebra $A = A^\al_3(G,D)$
		lies inside a VOA
		and carries representations of the Virasoro algebra.
		
		In particular,
		let ${\rt X_n}$ be a simply-laced root system.
		Then \cite{flm}, in points 8.9.5 and 8.9.7,
		provides a construction of a so-called
		lattice vertex operator algebra $V({\rt X_n})$.
		In particular, its weight-$2$ subspace $V({\rt X_n})_2$
		is closed under the product $\cdot_1\cdot$
		and admits an associating bilinear form by $\cdot_3\cdot$;
		see also \cite{dlmn}.
		We present a generalised version $\hat A = \hat A^\al_3(\rt X_n)$ of $V(\rt X_n)_2$,
		which contains a copy of $A^\al_3(\rt X_n)$.
		In effect, if the $\Phi^\al_3$-axes
		in $A^\al_3(\rt X_n)$ correspond to a set of positive roots of $\rt X_n$,
		then $\hat A^\al_3(\rt X_n)$ contains also idempotents corresponding to negative roots.

		A basis of $\hat A = \hat A^\al_3(G,D)$ may be parametrised
		as $\{ d^\rho  =  d^\rho _+, d^\rho _-\mid d\in D \}$,
		with algebra product and form, for $c,d\in D$ and $\ep,{\ep'}\in\{+,-\}$,
		\begin{equation}
			\label{eq double fischer}
			c^\rho_\ep d^\rho_{\ep'} = \begin{cases}
				d^\rho  & \text{ if } c = d \text{ and } \ep = {\ep'}, \\
				\frac{\al}{2}( c^\rho _\ep +  d^\rho _{\ep'} - (c^d)^\rho_{\ep{\ep'}})
				 & \text{ if } [c,d]\neq 1, \\
				0 & \text{ otherwise.}
			\end{cases}
			\quad
			\la c^\rho_\ep, d^\rho_{\ep'}\ra  = \begin{cases}
				1 & \text{ if } c = d \text{ and } \ep = {\ep'}, \\
				\frac{\al}{2} & \text{ if } [c,d]\neq 1, \\
				0 & \text{ otherwise.}
			\end{cases}
		\end{equation}
		Theorem 3.1 of \cite{dlmn} is that there exists an isometric surjection
		from $\hat A^{1/4}_3(\rt X_n)$ onto $V(\rt X_n)_2$;
		furthermore, the surjection is given by quotienting the radical of the form,
		which is trivial precisely when $\rt X_n = \rt A_n$.
		
		Recall that Miyamoto's theorem \cite{miyamoto}
		states that any idempotent $e$ in a moonshine-type VOA $V$
		carries a representation of the Virasoro algebra
		of central charge equal to the central charge of the idempotent.
		This means that the subalgebra $A = V_2$
		admits a decomposition $A = \la e\ra \oplus \bigoplus_{h \text{ h.w. }\op{Vir}_c}A^{2e}_h$,
		where $h$ ranges over the highest weights of $\op{Vir}_c$
		and $A^{2e}_h$ is, as usual, the (possibly empty) $h$-eigenspace of $\ad({2e})$ on $A$.
		Furthermore, the eigenspace $A^{2e}_h$ is an isotypic component of the representation,
		each $1$-dimensional subspace forming part of an irreducible representation of highest weight $h$.
		The fusion rules,
		similar to Clebsch--Gordan coefficients for tensor products of irreducible modules,
		dictate the fusion of irreducible modules under the constraint of fixed central charge,
		and hence also restrict in which eigenspaces the product of two eigenvectors may lie.
		This is of particular interest when $\op{Vir}_c$ has finitely many irreducible modules,
		\ie highest weights,
		namely, for $c = c(p,q)$ in \eqref{eq cc(p,q) vir},
		lying between $0$ and $1$.

		From Miyamoto's theorem we deduce that
		the coset axes (and identities) of the algebra $A^{1/4}_3(\rt X_n)$
		carry representations of the Virasoro algebra.
		The results of \cite{flm} showing the existence of the VOA $V(\rt X_n)$
		and its subalgebra $A = A^{1/4}_3(\rt X_n)$,
		together with our analysis of $A$,
		give a new formula for some highest weights of $\op{Vir}_c$
		for varying values of $c = c(m+1,m)$,
		and indicates a subset of the fusion rules
		for the relevant irreducible representations.

		In particular, from \eqref{eq cc(p,q) vir}
		and Corollary \ref{cor cc sym cosets} via \eqref{eq f quarter m}
		we deduce that $x_m=x_{A_m/A_{m-1}}$
		carries a representation of $\op{Vir}_{c(m+3,m+2)}$.
		We also know that its eigenvalues are $1,0,\eta(m+1),1-\eta(m)$ and $\eta(m+1)-\eta(m)$.
		Specialised to the case $\al=\frac{1}{4}$ from Lemma \ref{lem coset eigenvalues},
		we have $\eta_{1/4}(m) =\displaystyle{\frac{m}{2(m+2)}}$,
		and the eigenvalues of $x_m$ are $1, 0$ and
		\begin{equation}
			\eta_{1/4}(m+1) = \frac{m+1}{2(m+3)},\quad
			1-\eta_{1/4}(m) = \frac{m+4}{2(m+2)},\quad
			\eta_{1/4}(m+1)-\eta_{1/4}(m) = \frac{1}{(m+2)(m+3)}.
		\end{equation}
		\begin{observation}
			\label{obs hw identification}
			By inspection (confer \eqref{eq hw vir}), we find that
			\begin{equation}
				\begin{aligned}
					\label{eq hw identification}
					0 = \frac{1}{2}h^{c(m+3,m+2)}_{1,1},\\
					\eta_{1/4}(m+1) = \frac{1}{2}h^{c(m+3,m+2)}_{3,1},\\
					1-\eta_{1/4}(m) = \frac{1}{2}h^{c(m+3,m+2)}_{1,3},\\
					\eta_{1/4}(m+1)-\eta_{1/4}(m) = \frac{1}{2}h^{c(m+3,m+2)}_{3,3}.
				\end{aligned}
			\end{equation}
		\end{observation}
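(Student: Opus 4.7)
The plan is direct substitution and simplification, since both sides of each identity are explicit rational functions of $m$. With $p=m+3$ and $q=m+2$, we have $p-q=1$ and $4pq = 4(m+3)(m+2)$, so the Kac formula reduces to
\[
h^{c(m+3,m+2)}_{r,s} = \frac{(s(m+3)-r(m+2))^2 - 1}{4(m+3)(m+2)}.
\]
First I would record the explicit values of $\frac{1}{2}h^{c(m+3,m+2)}_{r,s}$ for $(r,s)\in\{(1,1),(3,1),(1,3),(3,3)\}$ by expanding $s(m+3)-r(m+2)$, squaring, subtracting $1$, and factoring; in each of the four cases the numerator factors nicely and admits a cancellation against $(m+3)(m+2)$ in the denominator.

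Second, I would write down the four right-hand sides using the specialisation $\eta_{1/4}(m) = \frac{m}{2(m+2)}$ already given in the excerpt, in particular computing $1-\eta_{1/4}(m) = \frac{m+4}{2(m+2)}$ and $\eta_{1/4}(m+1) - \eta_{1/4}(m) = \frac{(m+1)(m+2)-m(m+3)}{2(m+3)(m+2)} = \frac{1}{(m+3)(m+2)}$ by clearing denominators.

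Third, matching up the outputs term by term: $(1,1)$ gives $0$ on the nose; for $(3,1)$ the numerator is $(2m+3)^2-1 = 4(m+1)(m+2)$, matching $\frac{m+1}{2(m+3)}$; for $(1,3)$ the numerator is $(2m+7)^2-1 = 4(m+3)(m+4)$, matching $\frac{m+4}{2(m+2)}$; for $(3,3)$ the numerator is $9-1=8$, matching $\frac{1}{(m+3)(m+2)}$.

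There is no real obstacle here: the statement is an observation precisely because it amounts to checking that four explicit rational functions agree, and the author's phrase ``by inspection'' signals exactly this. The only mildly nontrivial step is keeping the arithmetic of the $(sp-rq)^2-(p-q)^2$ numerator straight across the four cases, so I would lay the computation out in a single \texttt{align*} block so the four verifications are visible side by side.
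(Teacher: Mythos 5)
Your proposal is correct and is exactly the paper's argument: the Observation is justified only ``by inspection,'' i.e.\ by substituting $p=m+3$, $q=m+2$ into the Kac formula \eqref{eq hw vir} and checking the four rational identities, which your computations (e.g.\ $(2m+3)^2-1=4(m+1)(m+2)$ and $(2m+7)^2-1=4(m+3)(m+4)$) carry out correctly.
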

		(Recall that the factor of $\frac{1}{2}$ is an artifact of different choices of scaling.
		For related reasons, eigenvalue $1$ does not enter the picture.)

	\subsection{Additional eigenvalues in $\hat A^\al_3(\rt A_n)$} \label{ss double fischer}
		We will observe that the coset axis $x_{\rt A_m/\rt A_{m-1}}$
		has up to seven distinct eigenvalues in $\hat A^\al_3(\rt A_n)$.
		By extending the work of Section \ref{ss eigval},
		we find the two additional eigenvalues in Corollary \ref{cor hat eigv},
		and make deductions along the lines of Section \ref{ss apps}
		culminating in Obsevation \ref{obs hat hw identification}.

		\begin{lemma}
			\label{lem id+ d-}
			In $A = A^\al_3(G,D) \subseteq \hat A = \hat A^\al_3(G,D)$,
			for a $k$-regular $(G,D)$,
			$$ d^\rho_- \id_A = \frac{\al}{2+\al k}\Bigl(kd^\rho_- + \sum_{\mathclap{c\in\nhood_D(d)}}(c^\rho_+ - c^\rho_-)\Bigr). $$
		\end{lemma}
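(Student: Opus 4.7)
The plan is to substitute the explicit formula for $\id_A$ from Corollary \ref{cor f3 id} and then compute $d^\rho_- \cdot e^\rho_+$ for each $e \in D$ via the multiplication rule (\ref{eq double fischer}), case by case. Since $A = A^\al_3(G,D)$ embeds into $\hat A$ by sending $d^\rho$ to $d^\rho_+$, the identity reads $\id_A = \tfrac{2}{2+\al k}\sum_{e\in D} e^\rho_+$, and it suffices to compute $d^\rho_- \sum_{e \in D} e^\rho_+$ and rescale.

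I would partition $D$ using \eqref{eq nhood 3trgp} into $\{d\}$, $\nhood_D(d)$, and $\commute_D(d)$. For the term $e = d$, we have $c=d$ but opposite signs $\ep=-,\ep'=+$, and since $[d,d]=1$ the product falls into the commuting (``otherwise'') branch of (\ref{eq double fischer}) and is $0$. For $e\in\commute_D(d)$, the same branch again gives $0$. For $e\in\nhood_D(d)$, the noncommuting branch applies, and using the sign product $-\cdot+=-$, the term equals $\tfrac{\al}{2}(d^\rho_- + e^\rho_+ - (d^e)^\rho_-)$. Summing over the three parts yields
$$ d^\rho_-\sum_{e\in D}e^\rho_+ = \frac{\al}{2}\Bigl(k\,d^\rho_- + \sum_{\mathclap{e\in\nhood_D(d)}}e^\rho_+ \;-\; \sum_{\mathclap{e\in\nhood_D(d)}}(d^e)^\rho_-\Bigr). $$

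The only step requiring comment is the re-indexing $\sum_{e\in\nhood_D(d)}(d^e)^\rho_- = \sum_{e\in\nhood_D(d)}e^\rho_-$. This is the direct analogue of the cancellation used in the proof of Lemma \ref{lem f3 alg unital}: conjugation by $d$ permutes $\nhood_D(d)$ bijectively (in the $3$-transposition setting, $\la d,e\ra\cong\Sym(3)$ so $d^e=e^d$ is the third transposition in this subgroup, which again lies in $\nhood_D(d)$), so the labelled sum is invariant under $e\mapsto d^e$. Substituting back and multiplying by $\tfrac{2}{2+\al k}$ gives the stated identity. I expect no genuine obstacle here: the only delicate points are tracking the sign products $\ep\cdot\ep'$ appearing in $(c^d)^\rho_{\ep\ep'}$ and the re-indexing of the noncommuting sum, both of which are routine once the case analysis is set up.
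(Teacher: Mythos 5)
Your proof is correct and follows essentially the same route as the paper's: expand $\id_A$ via Corollary \ref{cor f3 id}, split $\sum D^\rho_+$ along the partition \eqref{eq nhood 3trgp}, apply \eqref{eq double fischer} case by case, and re-index the sum over $(d^e)^\rho_-$ using the fact that $e\mapsto d^e=e^d$ permutes $\nhood_D(d)$. Your extra justification of the re-indexing (and of the sign bookkeeping) only makes explicit what the paper states in one line.
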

		\begin{proof}
			Using \eqref{eq nhood 3trgp} and \eqref{eq double fischer},
			\begin{equation}
				\begin{aligned}
					d^\rho_-\sum D^\rho_+
					 = d^\rho_-\Bigl(d^\rho_+ + \sum_{\mathclap{c\in\nhood_D(d)}}c^\rho_+ + \sum_{\mathclap{c\in\commute_D(d)}}c^\rho_+\Bigr)
					 = \frac{\al}{2}\sum_{\mathclap{c\in\nhood_D(d)}}(d^\rho_- + c^\rho_+ - (c^d)^\rho_-)
					 = \frac{\al}{2}\size{\nhood_D(d)}d^\rho_- + \frac{\al}{2}\sum_{\mathclap{c\in\nhood_D(d)}}(c^\rho_+ - (d^c)^\rho_-).
				\end{aligned}
			\end{equation}
			Note that $\{d^c\mid c\in\nhood_D(d)\} = \nhood_D(d)$,
			so summing over $d^c$ for all $c\in\nhood_D(d)$
			is the same as summing over $c\in\nhood_D(d)$.
		\end{proof}

		\begin{lemma}
			\label{lem id+ extra eigv}
			Suppose that $a,b,c,d\in D = D(\rt A_{m-1})$
			such that $\la a,b\ra,\la c,d\ra\cong {C_2}^2$ have equal support.
			Then
			\begin{equation}
				\label{eq hateta vect}
				\al(a^\rho_++b^\rho_+-c^\rho_+-d^\rho_+)
				+(1-\al)(a^\rho_-+b^\rho_--c^\rho_--d^\rho_-)
			\end{equation}
			is a $\hat\eta(m) =\displaystyle\frac{\al(m-1)}{1+\al(m-2)}$-eigenvector of $\id_A$.
		\end{lemma}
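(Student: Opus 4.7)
Realise $D = D(\rt A_{m-1})$ as the set of transpositions in $\Sym(m)$; the hypothesis $\la a,b\ra, \la c,d\ra \cong {C_2}^2$ with equal support says that $\{a,b\}$ and $\{c,d\}$ are two of the three unordered pairs of disjoint transpositions on a common $4$-element subset $\{p,q,r,s\} \subseteq \{1,\dotsc,m\}$. (If the pairs coincide, $v=0$ trivially.) Write
$$ P = a^\rho_+ + b^\rho_+ - c^\rho_+ - d^\rho_+, \qquad M = a^\rho_- + b^\rho_- - c^\rho_- - d^\rho_-, $$
so $v = \al P + (1-\al) M$. The plan is to compute $v\,\id_A$ by applying $P\,\id_A = P$ (since $P\in A$) together with Lemma \ref{lem id+ d-} termwise on $M$, then verify $v\,\id_A = \hat\eta(m)\,v$ by collecting coefficients.

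Since $W(\rt A_{m-1}) = \Sym(m)$ is $(2m-4)$-regular, Lemma \ref{lem id+ d-} gives
$$ M\,\id_A = \lm\bigl((2m-4)\,M + \Sigma_+ - \Sigma_-\bigr), \qquad \lm = \tfrac{\al}{2(1+\al(m-2))}, $$
where $\Sigma_\sigma := \sum_{f\in\nhood_D(a)}f^\sigma + \sum_{f\in\nhood_D(b)}f^\sigma - \sum_{f\in\nhood_D(c)}f^\sigma - \sum_{f\in\nhood_D(d)}f^\sigma$ for $\sigma\in\{+,-\}$. The main (essentially only) obstacle is the combinatorial identity
$$ \Sigma_\sigma \;=\; -2(a^\sigma + b^\sigma - c^\sigma - d^\sigma), $$
which for $\sigma=+$ reads $\Sigma_+ = -2P$ and for $\sigma=-$ reads $\Sigma_- = -2M$.

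To prove the identity, note $f = (xy) \in \nhood_D(e)$ iff $\size{\op{supp}(f) \cap \op{supp}(e)} = 1$. A transposition disjoint from $\{p,q,r,s\}$ contributes $0$ to $\Sigma_\sigma$. A transposition $f = (xy)$ with $x \in \{p,q,r,s\}$, $y \notin \{p,q,r,s\}$ lies in $\nhood_D(e)$ iff $x \in \op{supp}(e)$; since $\op{supp}(a) \sqcup \op{supp}(b) = \op{supp}(c) \sqcup \op{supp}(d) = \{p,q,r,s\}$, $f$ appears in exactly one of $\nhood_D(a),\nhood_D(b)$ and in exactly one of $\nhood_D(c),\nhood_D(d)$, contributing $+1-1=0$. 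For the six transpositions supported in $\{p,q,r,s\}$, using that any two elements drawn from different pairs among the three disjoint-transposition pairs on $\{p,q,r,s\}$ share exactly one point, each $f \in \{a,b\}$ contributes $0+0-1-1=-2$, each $f \in \{c,d\}$ contributes $+1+1-0-0=+2$, and the remaining two transpositions contribute $+1+1-1-1=0$. Summing over the four cases yields the stated identity.

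With the identity in hand, $v\,\id_A = \al P + (1-\al)\lm\bigl[(2m-2)M - 2P\bigr]$; routine simplification using the definition of $\lm$ shows the coefficient of $P$ becomes $\tfrac{\al^2(m-1)}{1+\al(m-2)}$ and that of $M$ becomes $\tfrac{\al(1-\al)(m-1)}{1+\al(m-2)}$, so both factor as $\hat\eta(m) = \tfrac{\al(m-1)}{1+\al(m-2)}$ times the corresponding coefficient in $v$. Hence $v\,\id_A = \hat\eta(m)\,v$. The precise weighting $\al : 1-\al$ in the definition of $v$ is exactly what forces this common factoring.
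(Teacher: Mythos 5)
Your proof is correct and follows essentially the same route as the paper: apply Lemma \ref{lem id+ d-} termwise to the minus-part, cancel the neighbourhood sums combinatorially (your case analysis over transpositions disjoint from, meeting, or contained in the common support is a slightly more careful version of the paper's argument, correctly handling the third disjoint pair), and then check that the weighting $\al:(1-\al)$ makes the plus- and minus-parts scale by the common factor $\hat\eta(m)$. The only cosmetic difference is that the paper ``solves'' for the eigenvector from $\id_A M=\frac{\al}{2+\al k}\bigl((k+2)M-2P\bigr)$ and $\id_A P=P$, whereas you verify the given combination directly; the computation is the same.
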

		\begin{proof}
			From Lemma \ref{lem id+ d-} we get an expansion
			\begin{equation*}
				\begin{aligned}
					\id_A & (a_-+b_--c_--d_-) \\
						& = \frac{\al}{2+\al k}\left(k(a_-+b_--c_--d_-)
							+ \sum_{\mathclap{x\in\nhood_D(a)}}(x_+-x_-)
							+ \sum_{\mathclap{x\in\nhood_D(b)}}(x_+-x_-)
							- \sum_{\mathclap{x\in\nhood_D(c)}}(x_+-x_-)
							- \sum_{\mathclap{x\in\nhood_D(d)}}(x_+-x_-)\right) \\
						& = \frac{\al}{2+\al k}\left(
							(k+2)(a_-+b_--c_--d_-)
							- 2(a_++b_+-c_+-d_+) \right).
				\end{aligned}
			\end{equation*}
			The second equality follows as follows.
			Suppose $x\in\nhood_D(a)\cup\nhood_D(b)\cup\nhood_D(c)\cup\nhood_D(d)$.
			If $x\not\in\{a,b,c,d\}$ then $x$ does not commute with precisely two of $\{a,b,c,d\}$,
			and these two themselves do not commute;
			the contributions of $x$ then cancel.
			Otherwise, $x\in\{a,b,c,d\}$
			and does not commute with two commuting elements,
			and $2(x^\rho_+-x^\rho_-)$ remains.
			Evidently $a^\rho_++b^\rho_+-c^\rho_+-d^\rho_+$ is a $1$-eigenvector for $\id_A$,
			so we can solve to find the result.
		\end{proof}

		\begin{lemma}
			\label{lem hat eta containment}
			The $\hat\eta(m+1)$-eigenspace of $\id_{\rt A_m}$
			decomposes into $\eta(m)$- and $\hat\eta(m)$-eigenvectors for $\id_{\rt A_{m-1}}$.
		\end{lemma}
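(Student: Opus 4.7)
The plan is to extend the strategy of Lemma~\ref{lem coset eigenvalues} from $A$ to $\hat A$. That earlier proof is organised around a chain of containments of eigenspaces of $\id_m$ and $\id_\ell$, which together force $A^{\id_m}_{\eta(m)}$ into the union of the $0$- and $\eta(\ell)$-eigenspaces of $\id_\ell$. I would carry out the analogous bookkeeping for $\id_{\rt A_m}$ and $\id_{\rt A_{m-1}}$ acting on $\hat A$, now with the new eigenvalues $\hat\eta(m+1)$ and $\hat\eta(m)$ in play.

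First I would establish that the eigenvalues of $\id_{\rt A_{m-1}}$ on $\hat A$ are contained in $\{1,0,\eta(m),\hat\eta(m)\}$, with eigenvectors of the first three kinds given by Lemma~\ref{lem id eigenvalues} and of the fourth by Lemma~\ref{lem id+ extra eigv}; a dimension count, mirroring the final paragraph of the proof of Lemma~\ref{lem id eigenvalues}, checks that these eigenspaces exhaust $\hat A$. Next I would reproduce the containments from Lemma~\ref{lem coset eigenvalues}: the $1$-eigenspace of $\id_{\rt A_{m-1}}$ sits inside the $1$-eigenspace of $\id_{\rt A_m}$, and the $0$-eigenspace of $\id_{\rt A_m}$ sits inside the $0$-eigenspace of $\id_{\rt A_{m-1}}$. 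In combination, these force $\smash{\hat A^{\id_{\rt A_m}}_{\hat\eta(m+1)}}$ to decompose under $\ad(\id_{\rt A_{m-1}})$ into a sum of eigenspaces whose eigenvalues lie in a subset of $\{0,\eta(m),\hat\eta(m)\}$.

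It remains to rule out the eigenvalue $0$, and this is the main obstacle. My strategy would be to use the explicit description from Lemma~\ref{lem id+ extra eigv}, case-splitting on whether the $4$-element support of the pair of Klein fours underlying a given $\hat\eta(m+1)$-eigenvector lies entirely in $\{1,\dotsc,m\}$ or contains the extra point $m+1$. Supports of the first kind manifestly yield $\hat\eta(m)$-eigenvectors, by applying Lemma~\ref{lem id+ extra eigv} to the smaller root system $\rt A_{m-1}$. Supports of the second kind require an explicit computation, expanding $\id_{\rt A_{m-1}}v$ using Lemma~\ref{lem id+ d-} together with the multiplication rule~\eqref{eq double fischer}, and extracting the resulting components as $\eta(m)$-eigenvectors of the form appearing in Lemma~\ref{lem id eigenvalues} with $z=m+1$.

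A dimension match provides the consistency check that completes the argument: the $\hat\eta(m+1)$-eigenspace has dimension $2\binom{m+1}{4}$ (two independent vectors per $4$-subset, from the relation between the three Klein fours on each support), which splits as $2\binom{m}{4}+2\binom{m}{3}$ by Pascal's identity, matching the count of $\hat\eta(m)$-eigenvectors coming from supports inside $\{1,\dotsc,m\}$ and the count of $\eta(m)$-eigenvectors coming from the remaining supports. This forces the absence of $0$ (and of $1$, already excluded by the containments), yielding the asserted decomposition.
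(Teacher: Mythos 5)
Your plan has the right outline, and your case split on whether the $4$-point support lies in $\{1,\dotsc,m\}$ or contains the extra point $m+1$ is exactly the paper's split; the first case is handled the same way (apply Lemma~\ref{lem id+ extra eigv} inside $\rt A_{m-1}$). But the second case, which is where the lemma actually needs proving, does not go through as you describe. You propose to expand $\id_{\rt A_{m-1}}v$ and extract ``$\eta(m)$-eigenvectors of the form appearing in Lemma~\ref{lem id eigenvalues} with $z=m+1$''. Those eigenvectors lie entirely in the plus part $A\subset\hat A$, whereas the vector \eqref{eq hateta vect} you must decompose has minus-components $b^\rho_-,d^\rho_-$ on transpositions through $m+1$; a $\hat\eta(m)$-eigenvector for $\id_{\rt A_{m-1}}$ is supported inside $\rt A_{m-1}$ and cannot supply these, and plus-only vectors certainly cannot. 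What is needed is a new family of \emph{mixed} $\eta(m)$-eigenvectors of $\id_{\rt A_{m-1}}$ with nonzero minus part --- this is the vector $x''$ the paper writes down, together with an auxiliary fourth point $w\le m$ used to build the companion $\hat\eta(m)$-eigenvector $x'$, giving $x=\tfrac12(x'+x'')$. These mixed eigenvectors appear nowhere in your argument, so the crucial computation is aimed at the wrong target.

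Your fallback, the dimension count, is also wrong, so it cannot rescue the exclusion of the eigenvalue $0$. The vectors of Lemma~\ref{lem id+ extra eigv} attached to different $4$-subsets are not linearly independent: modulo the plus part, $\ad(\id_{\rt A_m})$ acts on the span of the minus vectors through the adjacency matrix of the noncommuting graph of the transpositions of $\Sym(m+1)$ (the triangular graph), and the $\hat\eta(m+1)$-eigenvalue corresponds to its eigenvalue $-2$, of multiplicity $(m+1)(m-2)/2$ --- not $2\binom{m+1}{4}$. Already for $\Sym(5)$ your count gives $10$ vectors where the eigenspace is $5$-dimensional. Hence the Pascal-identity match $2\binom{m+1}{4}=2\binom{m}{4}+2\binom{m}{3}$ does not reflect actual eigenspace dimensions, and the asserted exhaustion of the spectrum of $\id_{\rt A_{m-1}}$ on $\hat A$ by $\{1,0,\eta(m),\hat\eta(m)\}$ (which the paper never needs) is left unproved by your count. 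The paper's proof avoids all of this global spectral bookkeeping by exhibiting, for each generating eigenvector, the explicit two-term decomposition described above.
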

		\begin{proof}
			Take $D = D(\rt A_m)$ and $E = D(\rt A_{m-1})$,
			$a,b,c,d\in D$ as in Lemma \ref{lem id+ extra eigv}
			composing the $\hat\eta(m+1)$-eigenvector $x$ of $\id_m$,
			and let $\Om$ be the support of $a,b,c,d$.
			Then either $\Om$ is contained in the support of $E$,
			in which case \eqref{eq hateta vect} is a $\hat\eta(m)$-eigenvector
			for $\id_m = \id_{\rt A_m}$.
			Or $\size{\Om\cap\op{Supp}(E)}=3$,
			and we suppose without loss of generality that $a,c\in E$
			and $b,d\not\in E$.
			Then there exist $b',d'\in E$ such that
			$a,b',c,d'$ are as in Lemma \ref{lem id+ extra eigv},
			so
			\begin{equation}
				x' = \al(a^\rho_+ + b'^\rho_+ - c^\rho_+ - d'^\rho_+)
				+ (1-\al)(a^\rho_- + b'^\rho_- - c^\rho_- - d'^\rho_-)
			\end{equation}
			is a $\hat\eta(m)$-eigenvector for $\id_m$.
			Also
			\begin{equation}
				x'' = \al(a^\rho_+ + 2b^\rho_+ - b'^\rho_+ -c^\rho_+ -2d^\rho_+ + d'^\rho_+)
				- (1-\al)(a^\rho_- + 2b^\rho_- - b'^\rho_- -c^\rho_- -2d^\rho_- - d'^\rho_-)
			\end{equation}
			is a $\eta(m)$-eigenvector for $\id_m$,
			and it is clear that $x = \frac{1}{2}(x' + x'')$.
		\end{proof}

		Note that the last lemma would fail
		for $\id_m$ and $\id_\ell$ if $\ell < m - 1$.
		In that case, many more eigenvalues are possible and do occur.

		\begin{corollary}
			\label{cor hat eigv}
			The eigenvectors of $x_m = x_{\rt A_m/\rt A_{m-1}}$ in $\hat A^\al_3(\rt A_m)$ are
			\begin{equation*}
				1,\quad
				0,\quad
				\eta(m+1),1-\eta(m),\quad
				\eta(m+1)-\eta(m),\quad
				\hat\eta(m+1)-\eta(m),\quad
				\hat\eta(m+1)-\hat\eta(m).
				\qed
			\end{equation*}
		\end{corollary}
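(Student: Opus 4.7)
My plan is to extend the simultaneous-diagonalisation argument of Lemma \ref{lem coset eigenvalues} from $A^\al_3(\rt A_m)$ to $\hat A = \hat A^\al_3(\rt A_m)$, reading off the eigenvalues of $x_m = \id_{\rt A_m} - \id_{\rt A_{m-1}}$ as differences of simultaneous eigenvalues of $\id_{\rt A_m}$ and $\id_{\rt A_{m-1}}$.

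First I would catalogue the eigenvalues of $\id_{\rt A_m}$ on all of $\hat A$: these are $1$, $0$, $\eta(m+1)$, inherited from the subalgebra $A = A^\al_3(\rt A_m) \subseteq \hat A$ via Lemma \ref{lem id eigenvalues}, together with $\hat\eta(m+1)$, produced explicitly by Lemma \ref{lem id+ extra eigv}. A dimension count against $\dim\hat A = m(m+1)$ would confirm that these four eigenspaces exhaust $\hat A$, in the style of the tail of the proof of Lemma \ref{lem id eigenvalues}.

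Next I would determine the $\id_{\rt A_{m-1}}$-eigenvalues on each of the four $\id_{\rt A_m}$-eigenspaces. The containments established in Lemma \ref{lem coset eigenvalues} apply directly on the $\id_{\rt A_m}$-eigenspaces for $1$, $0$ and $\eta(m+1)$, yielding $\id_{\rt A_{m-1}}$-eigenvalues in $\{1,0,\eta(m)\}$, $\{0\}$ and $\{0,\eta(m)\}$ respectively. On the new $\hat\eta(m+1)$-eigenspace, Lemma \ref{lem hat eta containment} supplies the decomposition into $\id_{\rt A_{m-1}}$-eigenspaces with eigenvalues $\eta(m)$ and $\hat\eta(m)$. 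Taking pointwise differences produces exactly the list
\[
\{0,\ 1,\ 1-\eta(m),\ \eta(m+1),\ \eta(m+1)-\eta(m),\ \hat\eta(m+1)-\eta(m),\ \hat\eta(m+1)-\hat\eta(m)\}
\]
claimed in the statement.

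The main obstacle is the closure claim in the first step, namely that no further eigenvalues of $\id_{\rt A_m}$ occur in $\hat A$. Since $\id_A$ acts on the $d^\rho_-$-part of $\hat A$ by mixing the $\pm$ components (Lemma \ref{lem id+ d-}), the $\hat\eta(m+1)$-eigenvectors of Lemma \ref{lem id+ extra eigv} are not obvious from the basis; one must enumerate them, indexed by pairs of distinct Klein four-subgroups of $D(\rt A_m)$ with common support, and verify that together with the eigenvectors inside $A$ the total dimension is $m(m+1)$. Alternatively, Lemma \ref{lem id+ d-} lets one solve for every $d^\rho_-$ as a combination of the known eigenvectors, closing the argument.
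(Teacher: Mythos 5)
Your overall strategy --- joint diagonalisation of $\id_{\rt A_m}$ and $\id_{\rt A_{m-1}}$, reading the eigenvalues of $x_m$ as differences, with Lemma \ref{lem id+ extra eigv} supplying $\hat\eta(m+1)$-eigenvectors and Lemma \ref{lem hat eta containment} their decomposition --- is exactly the derivation the paper intends. The genuine gap is in your first step. In $\hat A^\al_3(\rt A_m)$ the element $\id_{\rt A_m}$ is the identity of the plus subalgebra $A=A^\al_3(\rt A_m)$, so on $A$ its only eigenvalue is $1$: the eigenvalues $0$ and $\eta(m+1)$ are \emph{not} inherited from $A$ via Lemma \ref{lem id eigenvalues}, since that lemma requires a proper embedding (nonempty $\C_D(E)$ and $\C_E(\Om)$) and degenerates when $E=D$. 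The true $0$- and $\eta(m+1)$-eigenvectors of $\id_{\rt A_m}$ necessarily involve the $d^\rho_-$: by Lemma \ref{lem id+ d-} the induced action on $\hat A/A$ is $\frac{\al}{2+\al k}(k\,\id-N)$, $k=2m-2$, where $N$ is the adjacency matrix of the noncommuting graph on $D(\rt A_m)$ (the triangular graph), whose eigenvalues $2(m-1)$, $m-3$, $-2$ with multiplicities $1$, $m$, $\frac{(m+1)(m-2)}{2}$ yield precisely $0$, $\eta(m+1)$, $\hat\eta(m+1)$. Consequently your dimension count cannot close as described: the $1$-eigenspace contributes $\frac{m(m+1)}{2}$ and the vectors of Lemma \ref{lem id+ extra eigv} span only $\frac{(m+1)(m-2)}{2}$, totalling $m^2-1<m(m+1)$, and for the same reason the fallback of expressing every $d^\rho_-$ in the span of your catalogued eigenvectors fails. (A concrete missing eigenvector: $\sum_{d\in D}d^\rho_+-\frac{2+\al k}{\al k}\sum_{d\in D}d^\rho_-$ is a $0$-eigenvector of $\id_{\rt A_m}$.)

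This also undermines your second step. The containments of Lemma \ref{lem coset eigenvalues} were proved inside the plus algebra, so they say nothing about how the minus-involving $0$- and $\eta(m+1)$-eigenvectors of $\id_{\rt A_m}$ decompose under $\id_{\rt A_{m-1}}$, and Lemma \ref{lem hat eta containment} treats only the $\hat\eta(m+1)$-eigenspace. Without an analogue of that lemma for these two eigenspaces you neither actually obtain the listed eigenvalues $\eta(m+1)$ and $\eta(m+1)-\eta(m)$ (which in this ambient algebra can only come from minus-involving vectors, since on $A$ itself the coset axis has just $1$, $0$, $1-\eta(m)$), nor exclude spurious differences such as $\eta(m+1)-\hat\eta(m)$ or $-\eta(m)$. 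The repair is to exhibit the minus-type $0$- and $\eta(m+1)$-eigenvectors, verify the count $\frac{m(m+1)}{2}+1+m+\frac{(m+1)(m-2)}{2}=m(m+1)$, and then prove, in the style of Lemma \ref{lem hat eta containment}, that the new $0$-eigenspace lies in the $0$-eigenspace of $\id_{\rt A_{m-1}}$ and the new $\eta(m+1)$-eigenspace decomposes into $0$- and $\eta(m)$-eigenvectors only.
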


		We specialise the new eigenvectors to the case $\al = \frac{1}{4}$:
		as $\displaystyle\smash{\hat\eta_{1/4}(m) = \frac{m-1}{m+2}}$ in this case,
		\begin{equation}
			\label{eq hat 1/4 eigvals}
			\begin{aligned}
				\hat\eta_{1/4}(m+1) - \eta_{1/4}(m) & = \frac{m(m+1)}{2(m+2)(m+3)}, \\
				\hat\eta_{1/4}(m+1) - \hat\eta_{1/4}(m) & = \frac{3}{(m+2)(m+3)}.
			\end{aligned}
		\end{equation}
		It is now a simple matter for us to complement Observation \ref{obs hw identification} with
		\begin{observation}
			\label{obs hat hw identification}
			We see that
			\begin{equation}
				\begin{aligned}
					\label{eq hat hw identification}
					\hat\eta_{1/4}(m+1)-\eta_{1/4}(m) & = \frac{1}{2}h^{c(m+3,m+2)}_{5,3}, \\
					\hat\eta_{1/4}(m+1)-\hat\eta_{1/4}(m) & = \frac{1}{2}h^{c(m+3,m+2)}_{5,5}. \\
				\end{aligned}
			\end{equation}
		\end{observation}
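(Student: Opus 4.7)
The plan is to verify both identities by direct substitution into the Kac formula \eqref{eq hw vir}, using the eigenvalue expressions from \eqref{eq hat 1/4 eigvals} which were established in Corollary \ref{cor hat eigv} and the subsequent specialisation $\hat\eta_{1/4}(m) = (m-1)/(m+2)$. There is no structural ingredient to be added; everything needed has already been proven in Lemmas \ref{lem id+ d-}, \ref{lem id+ extra eigv} and \ref{lem hat eta containment}, and the observation is merely a numerical confrontation with the Virasoro data, parallel to how Observation \ref{obs hw identification} was settled.

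Concretely, I would set $p = m+3$ and $q = m+2$, so that $p - q = 1$ and $pq = (m+2)(m+3)$, and rewrite the Kac weight as
\begin{equation*}
    h^{c(m+3,m+2)}_{r,s} = \frac{(s(m+3) - r(m+2))^2 - 1}{4(m+2)(m+3)}.
\end{equation*}
For the first identity I would take $(r,s) = (5,3)$, compute $s(m+3) - r(m+2) = -(2m+1)$, square to obtain $4m^2 + 4m + 1$, subtract $1$, and factor the numerator as $4m(m+1)$; dividing gives $m(m+1)/((m+2)(m+3))$, and halving matches the first line of \eqref{eq hat 1/4 eigvals}. For the second identity I would take $(r,s) = (5,5)$, obtaining $s(m+3) - r(m+2) = 5$, and the formula collapses immediately to $24/(4(m+2)(m+3)) = 6/((m+2)(m+3))$, half of which is the second line of \eqref{eq hat 1/4 eigvals}.

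There is no real obstacle: the only non-trivial matching happened earlier, when the denominators $(m+2)(m+3)$ appearing naturally from the axial computation had to be recognised as $pq$ for consecutive integers. The main content of the observation is conceptual rather than technical, namely that the two newly-found eigenvalues in $\hat A^{1/4}_3(\rt A_n)$ land precisely on the $(5,3)$ and $(5,5)$ entries of the Kac table of $\op{Vir}_{c(m+3,m+2)}$, continuing the pattern $(1,1),(3,1),(1,3),(3,3)$ of Observation \ref{obs hw identification} into the next diagonal. I would therefore present the proof as a short display of the two substitutions and close.
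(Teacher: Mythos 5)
Your proposal is correct and matches the paper's (implicit) argument: the observation is verified exactly by substituting $p=m+3$, $q=m+2$ and $(r,s)=(5,3),(5,5)$ into the Kac formula \eqref{eq hw vir} and comparing with the eigenvalue differences already computed in \eqref{eq hat 1/4 eigvals}. Both substitutions check out arithmetically, so nothing further is needed.
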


	\clearpage

\end{document}